\documentclass[11pt]{article}

\topmargin -1.5cm
\textheight 23cm
\textwidth 17cm 
\oddsidemargin 0cm

\usepackage{amsfonts}
\usepackage{amssymb}
\usepackage{amsmath,mathrsfs}
\usepackage{amsthm}
\usepackage{latexsym}
\usepackage{layout}

\usepackage{textcomp}
\usepackage{amsbsy}
\usepackage{latexsym}
\usepackage[mathscr]{eucal}
\usepackage{amsfonts}
\usepackage{amssymb}
\usepackage[usenames]{color}
  \usepackage{graphics} 
  \usepackage{epsfig} 
\usepackage{graphicx}  \usepackage{epstopdf}

\usepackage[english]{babel}

\theoremstyle{plain}
\begingroup
\newtheorem{theorem}{Theorem}[section]
\newtheorem{lemma}[theorem]{Lemma}
\newtheorem{proposition}[theorem]{Proposition}
\newtheorem{corollary}[theorem]{Corollary}
\endgroup

\theoremstyle{definition}
\begingroup
\newtheorem{definition}[theorem]{Definition}
\newtheorem{remark}[theorem]{Remark}

\endgroup
 
\theoremstyle{remark}
\begingroup

\endgroup

\mathsurround=1pt
\mathchardef\emptyset="001F

\numberwithin{equation}{section}


\newcommand{\R}{{\mathbb R}}
\newcommand{\PP}{{\mathcal P}_1}
\newcommand{\WW}{{\mathcal W}_1}
\newcommand{\U}{{\mathcal U}}

\newcommand{\supp}{\mathrm{supp}}

\begin{document}

\title{Mean-Field Sparse Optimal Control}


\author{Massimo Fornasier\thanks{Technische Universit\"at M\"unchen, Fakult\"at Mathematik, Boltzmannstrasse 3
 D-85748, Garching bei M\"unchen, Germany. {\tt massimo.fornasier@ma.tum.de}}, Benedetto Piccoli\thanks{Department of Mathematical Sciences, Rutgers University - Camden, Camden, NJ. {\tt piccoli@camden.rutgers.edu}}, Francesco Rossi\thanks{Aix-Marseille Univ, LSIS, 13013, Marseille, France. {\tt francesco.rossi@lsis.org}}}

\maketitle

\begin{abstract}
We introduce  the rigorous limit process connecting finite dimensional sparse optimal control problems with ODE constraints, modeling parsimonious interventions on the dynamics of a moving population
divided into leaders and followers, to an 
infinite dimensional optimal control problem with a constraint given by a system of ODE for the leaders
coupled with a PDE of Vlasov-type, governing the dynamics of the probability distribution of the followers. 
In the classical mean-field theory one studies the behavior of a large number of small individuals {\it freely interacting} with each other, by 
simplifying the effect of all the other individuals on any given individual by a single averaged effect. In this paper we address instead 
the situation where the leaders are actually influenced also by an external {\it policy maker}, and we propagate its effect for the number $N$ of followers going to infinity. 
The technical derivation of the sparse mean-field optimal control is realized by the simultaneous development of the mean-field limit
of the equations governing the followers dynamics together with the $\Gamma$-limit of the {finite dimensional} sparse optimal control problems.
\end{abstract}

\vspace{.3cm}
{\bf Keywords: }  Sparse optimal control, mean-field limit, $\Gamma$-limit,  optimal control with ODE-PDE constraints.\\


\vspace{1cm}
\tableofcontents
\section{Introduction}

In several individual based models for multi-agent motion the finite-dimensional dynamics in $2 d \times N$ variables, where $N$ is the number of individuals and $d$ is the dimension of the space in which {the motion of} such individuals evolves, is given by
\begin{equation}
\begin{cases}
\dot x_i = v_i, & \\
\dot v_i =  H\star \mu_N(x_i,v_i), & i=1,\dots N, \quad t \in [0,T],
\end{cases}\label{startsys}
\end{equation}
where $H : \mathbb R^{2d} \to \mathbb R^d$ is a locally Lipschitz interaction kernel with sublinear growth whose action on the group is modeled by  convolution, {where  the atomic measure
\begin{equation}
\mu_N(t) = \frac{1}{N} \sum_{i=1}^N \delta_{(x_i(t),v_i(t))}
\label{muNfoll}
\end{equation}
differently represents  the group of agents.}  As a relevant example of this setting, we mention the interaction kernel $H(x,v):= a(|x|)v$, for a bounded nonincreasing function $a: \mathbb R_+ \to \mathbb R_+$, which gives the well-known alignment model of Cucker and Smale \cite{CS,cusm07}, see also the generalizations in \cite{HaHaKim}, as well as interaction kernels of the type $H(x,v):= f(|x|)x$, where the function $f: \mathbb R_+ \to \mathbb R$ can encode small range repulsion and medium-long range attraction, as considered in \cite{CuckerDong13}, see Figure \ref{fig:repatt}. 

\begin{figure}[!htb]
\centering
\includegraphics[scale=0.6]{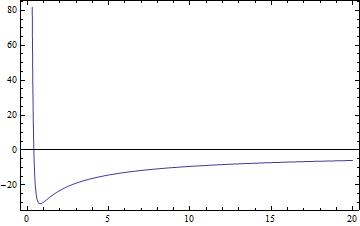}
\caption{Model function $f(r)= \frac{\sigma_r}{r^{4}} - \frac{\sigma_a}{r^{0.4}}$, {$\sigma_r>0$, $\sigma_a>0$,} of small range repulsion and medium-long range attraction.}
\label{fig:repatt}
\end{figure}
As discussed in details in the aforementioned papers, such systems can exhibit convergence  to certain interesting attractors, representing a higher level of global organization, although such spontaneous coordination may be conditional, depending
on the initial configuration.     In the recent work  \cite{bofo13,CFPT} the external control of such systems has been considered in order to promote the collective organization of the group of agents also in those situations where the initial conditions are out of the basin of attraction of the interesting configurations. The emphasis given in this context was on {\it sparse} controls, meaning that we consider systems 
\begin{equation}
\begin{cases}
\dot x_i = v_i, & \\
\dot v_i =  H\star \mu_N(x_i,v_i) + u_i, & i=1,\dots N, \quad t \in [0,T],
\end{cases}\label{contrsys}
\end{equation}
where $u_i:[0,T] \to \mathbb R^d$ are  measurable control functions which we wish being vanishing for most of the $i=1,\dots,N$ and possibly for most of the $t \in [0,T]$. This choice of controls models the {\it parsimonious} and {\it moderate}
external intervention of a government of the group, for instance the role of a mediator in an assembly, where the group needs to reach unanimous consensus on a common conduct, as it is the case for the voting system in the Council of the European Union, where  unanimous decision  are usually targeted.
\\

When the number of the involved agents $N$ is very large, the solution of an optimal control problem for a system of the type \eqref{contrsys} unfortunately becomes an impossible task because of the {\it curse of dimensionality}. Already dealing with systems of a few hundreds agents is computationally extremely demanding and often numerically inaccurate. Therefore, we may wonder whether we can describe an appropriate limit dynamics and an optimal control problem for the limit case $N \to +\infty$, which can be re-conducted to computationally manageable dimensionalities. When no control is involved, this procedure is well-known as in the classical mean-field theory one studies the evolution of a large number of small individuals {\it freely interacting} with each other, by 
simplifying the effect of all the other individuals on any given individual by a single averaged effect. This results in considering the evolution of the particle density distribution in
the state variables, leading to so-called mean-field partial differential equations of Vlasov- or Boltzmann-type \cite{1151.82351}. In particular, for our system \eqref{startsys} the corresponding mean-field equations are 
$$
\partial_t\mu+v \cdot \nabla_x \mu = \nabla_v \cdot \left [ \left ( H \star \mu \right ) \mu \right ]. 
$$
We refer to \cite{CCH13} and the references therein 
for a recent survey  on some of the most relevant mathematical aspects on this approach to swarm models.
Nevertheless, the proper definition of a limit dynamics when an external control is added to the system and it is supposed to have some {\it sparsity} surprisingly remains a difficult task. In fact, the most immediate and perhaps natural approach would be to assign as well to the finite dimensional control $u$ an atomic vector valued time-dependent measure
$$
\nu_N(t) = \sum_{i=1}^N u_i \delta_{(x_i(t),v_i(t))}
$$
and consider a proper limit $\nu$ for $N \to +\infty$, leading to the controlled PDE
\begin{equation}\label{nufkup}
\partial_t\mu+v \cdot \nabla_x \mu = \nabla_v \cdot \left [ \left ( H \star \mu  \right ) \mu + \nu \right ], 
\end{equation}
where now $\nu$ represents an external source field. Unfortunately, despite the fact that $\nu$ is supposed to be the minimizer of certain cost functionals which may allow for the necessary compactness to derive the limit $\nu_N \to \nu$, 
it seems eventually {hard} to design a cost functional with a proper meaning in the finite dimensional model and at the same time promoting a good behavior of the measure $\nu$. In fact, for the optimal control problems considered for instance in \cite[Section 5]{CFPT} such a limit procedure does not prevent $\nu$ to be singular with respect to $\mu$. This means that in the weak formulation of the equation \eqref{nufkup} the role of $\nu$ is essentially mute, it does not interact at all with $\mu$, hence it loses completely its steering purpose. Imaginatively, it is like trying to steer a river by means of toothpicks! Even if we considered in \eqref{nufkup}  the absolutely continuous part only $\mu_a = f \mu$ of $\nu$ with respect to $\mu$, if there was any, we would end up with an equation of the type 
\begin{equation}\label{nufkup2}
\partial_t\mu+v \cdot \nabla_x \mu = \nabla_v \cdot \left [ \left ( H \star \mu  + f \right ) \mu \right ], 
\end{equation}
where now $f$ is a force field which is just an $L^1$-function with respect to the measure $\mu$. Unfortunately, existence and stability of solutions for equations of the type \eqref{nufkup2} is established only for fields $f$ with at least {\it some} regularity \cite{am08}.
At this point it seems that our quest for a proper definition of a {\it mean-field optimal control} gets to a dead-end, unless we allow for some modeling compromise. The first successful approach actually starts from the equation \eqref{nufkup2}, by assuming $f(t,x,v)$ being in a proper compact set of a function space of  Carath{\'e}odory functions in $t$ and locally Lipschitz continuous functions in $(x,v)$, and proceeding back to reformulate the finite dimensional modeling, leading to systems of the type
\begin{equation}
\begin{cases}
\dot x_i = v_i, & \\
\dot v_i =  H\star \mu_N(x_i,v_i) + f(t,x_i,v_i), & i=1,\dots N, \quad t \in [0,T],
\end{cases}\label{contrsys2}
\end{equation}
where now $f$ is a feedback control. This approach has been recently explored in \cite{MFOC}, where a proof of a simultaneous $\Gamma$-limit and mean-field limit of the finite dimensional optimal controls for \eqref{contrsys2} to a corresponding infinite dimensional optimal control for \eqref{nufkup2} has been established. We also mention the related work \cite{befrph13} where first order conditions are derived for optimal control problems of equations of the type \eqref{nufkup2} for Lipschitz feedback controls $f(t,x,v)$ in a stochastic setting. Such conditions result in a coupled system of a forward Vlasov-type equation and a backward Hamilton-Jacobi equation, {similarly to situations encountered in the context of {\it mean-field games} \cite{lali07} or the {\it Nash certainty equivalence}  \cite{HCM03}.}
 Certainly, this calls for a renewed enthusiasm and hope, until one realizes that actually the problem of characterizing the optimal controls $f(t,x,v)$ with the purpose of an efficient and manageable numerical computation may not have simplified significantly, as it is not a trivial task to obtain a rigorous derivation and the well-posedness of the corresponding first order conditions as in  \cite{befrph13} in a fully deterministic setting. This introduces us to the main scope of this paper. Inspired by the successful construction of the coupled $\Gamma-$ and mean-field-limits in \cite{MFOC}
and the multiscale approach in \cite{crpito10-1,crpito11},  {to describe a mixed granular-diffuse dynamics of a crowd}, we modify here our modeling not starting anymore from \eqref{nufkup2}, but actually from the initial system \eqref{startsys}.
\\
Let us now add to \eqref{startsys} $m$ particular individuals, which interact freely with the $N$ individuals given above. We denote by $(y,w)$ the space-velocity variables of these new individuals. We shall consider these $m$ individuals as ``leaders'' of the crowd, while the other $N$ individuals will be called  ``followers''. We assume that we have a small amount $m$ of leaders that have a great influence on the population, and a large amount $N$ of followers which have a small influence on the population.

Then, the dynamics we shall study is
\begin{equation}
\begin{cases}
\dot y_k = w_k, & \\
\dot w_k =  H\star \mu_N(y_k,w_k) + H \star\mu_m(y_k,w_k)\, & k=1,\dots m, \quad t \in [0,T],\\
\dot x_i = v_i, & \\
\dot v_i =  H\star \mu_N(x_i,v_i) + H \star\mu_m(x_i,v_i)\, & i=1,\dots N, \quad t \in [0,T],
\end{cases}
\end{equation}
where we considered the additional atomic measure
\begin{equation}\label{atomicleaders}
\mu_m(t) = \frac{1}{m} \sum_{k=1}^n \delta_{(y_k(t),w_k(t))},
\end{equation}
supported on the trajectories $t \mapsto (y_k(t),w_k(t))$, $k=1,\dots,m$. (One can generalize this model to the one where different kernels for the interaction between a leader and a follower, two leaders, etc. are considered. All the results of this paper easily generalize to this setting.) From now on, the notations $\mu_N$ and $\mu_m$ for the atomic measures representing followers and leaders respectively will be considered fixed and we shall  use them extensively in the rest of the paper.
Up to now, the dynamics of the system is similar to a standard multi-agent dynamics for $N+m$ individuals, with the only difference that the actions of leaders and followers have different weights on a single individuals, $\frac{1}{m}$ and $\frac{1}{N}$, respectively. Let us now add controls on the $m$ leaders. We obtain the system
\begin{equation}
\begin{cases}
\dot y_k = w_k, & \\
\dot w_k =  H\star \mu_N(y_k,w_k) + H \star\mu_m(y_k,w_k) + u_k\, & k=1,\dots m, \quad t \in [0,T],\\
\dot x_i = v_i, & \\
\dot v_i =  H\star \mu_N(x_i,v_i) + H \star\mu_m(x_i,v_i)\, & i=1,\dots N, \quad t \in [0,T],
\end{cases}
\label{e-findim}
\end{equation}
where $u_k:[0,T]\to \R^d$, are measurable controls {for $k=1,\dots,m$, and we define  the control map  $u:[0,T]\to \R^{md}$ by $u(t)=(u_1(t),\dots,u_m(t))$ for $t \in [0,T]$.}  The main difficulty arising in this context is that one usually deals with control functions $u(\cdot)$ that are discontinuous in time. In fact, one needs to consider solutions of the finite-dimensional problem \eqref{e-findim} in the Carath{\'e}odory sense, i.e., functions $t\mapsto(y(t),w(t),x(t),v(t))$ that are absolutely continuous with respect to time and satisfy the integral formulation of \eqref{e-findim}. For the sake of completeness and readability of our results we report some well-known facts on such solutions in the Appendix. In this setting, it makes sense to choose
$u\in L^1([0,T],\U)$ where $\U$ is a fixed nonempty compact subset of $\mathbb R^{d \times m}$ and $\U \subset B(0,U)$ for $U>0$.  Finite-dimensional control problems in this setting are of interest, and we will focus on a specific class of control problems, namely optimal control problems in a finite-time horizon with fixed final time. We design  {\it sparse} control $u$ to drive the whole population of $m+N$ individuals to a given configuration. We model this situation by solving the following optimization problem 
\begin{equation}\label{sparseoptcontr}
\min_{u \in L^1([0,T],\U)} \int_{0}^T  \left \{ L(y(t),w(t),\mu_N(t))   +  \frac{1}{m} \sum_{k=1}^m |u_k(t)|  \right \}dt,
\end{equation}
where $L(\cdot)$ is a suitable continuous map in its arguments. (For example, one can use $L$ to model the distance between the state variables and the basin of attraction to the interesting configurations. Then the optimization leads the system to goal-driven dynamics.) The use of (scalar) $\ell^1$-norms to penalize controls as in \eqref{sparseoptcontr} dates back to the 60's with the models of linear fuel consumption \cite{crlo65}. 
More recent work in dynamical systems \cite{voma06} resumes again $\ell^1$-minimization emphasizing its sparsifying power. 
Also in optimal control with partial differential equation constraints it became rather popular to use $L^1$-minimization to enforce sparsity of controls \cite{caclku12,clku11,clku12,hestwa12,pive12,st09,wawa11}, for instance
in the modeling of optimal placing of actuators or sensors.
\\

In order to give precise meaning to the limit of the optimal control problems \eqref{e-findim}-\eqref{sparseoptcontr} for the number $N$ of followers tending to infinity, we need to address a few technical challenges.
As already observed above, due to the presence of the control $u(\cdot)$, the classical results for the mean-field limit of \eqref{e-findim} cannot be directly applied,  because here the right-hand side is discontinuous in time, see for instance \cite{ambrosio,CanCarRos10,gw2,pedestrian} where continuity of the right-hand-side is assumed. Moreover, only a part of the $m+N$ variables increases in number, while the number $m$ of leaders is kept constant. Finally, even a description of the whole population of leaders and followers by a unique measure would not catch the possibility of acting on the leaders only. \\

As one of our main results, we shall show in Theorem \ref{p-infdimsol} that, given a control strategy $u\in L^{1}([0,1],\U)$, it is possible to formally define a mean-field limit of \eqref{e-findim} when $N\to\infty$ in the following sense: the population is represented by the vector of positions-velocities $(y,w)$ of the leaders coupled with the compactly supported probability measure $\mu\in \PP(\R^{2d})$ of the followers in the position-velocity space. Then, the mean-field limit will result in a coupled system of an ODE with control for $(y,w)$ and a PDE without control for $\mu$. More precisely the limit dynamics will be described by
\begin{equation}
\begin{cases}
\dot y_k = w_k, & \\
\dot w_k =  H\star(\mu+\mu_m)(y_k,w_k)+u_k, & k=1,\dots m, \quad t\in[0,T]\\
\partial_t\mu+v \cdot \nabla_x \mu = \nabla_v \cdot \left [ \left ( H \star (\mu +\mu_m) \right ) \mu \right ],
\end{cases}
\label{e-infdim0}
\end{equation}
where the weak solutions of the equations have to be interpreted in the Carath{\'e}odory sense. See Figure \ref{fig:multiscale} for an example of the dynamics of \eqref{e-infdim0} for a multiscale pedestrian crowd mixing a granular discrete part and a diffuse part.

\begin{figure}[!htb]
\centering
\includegraphics[scale=0.7]{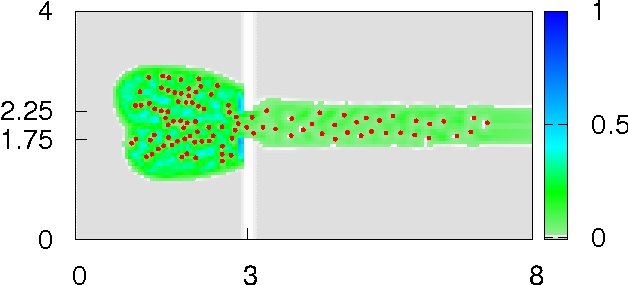}
\caption{A mixed granular-diffuse crowd leaving a room through a door. This figure was kindly provided by the authors of \cite{crpito11}. Copyright \textcopyright2011 Society for Industrial and Applied Mathematics.  Reprinted with permission.  All rights reserved.}
\label{fig:multiscale}
\end{figure}

Simultaneously, we shall prove in Theorem \ref{thm:gamma} a $\Gamma$-convergence result,  implying that the optimal controls $u^*_N$ of the finite dimensional optimal control problems \eqref{e-findim}-\eqref{sparseoptcontr} converge weakly in  $L^1([0,T],\U)$ for $N \to +\infty$ to optimal controls $u^*$, which are minimal solutions of 
\begin{equation}
\min_{u \in L^1([0,T],\U)} \int_{0}^T  \left \{ L(y(t),w(t),\mu(t))   +  \frac{1}{m} \sum_{k=1}^m |u_k(t)|  \right \}dt.
\label{Ffun0}
\end{equation}
This is actually an existence result of solutions for the infinite-dimensional optimal control problem \eqref{e-infdim0}-\eqref{Ffun0}. Differently from the one proposed in \cite{MFOC} though, this model retains the controls only on a finite and small group of agents, despite the fact that the entire population can be very large (here modeled by the limit $N\to +\infty$). Hence, by the stratagem of dividing the populations in two groups and allowing only one of them to have growing size we do not need anymore to be necessarily exposed to the curse of dimensionality when it comes to numerically solving the corresponding optimal control problem. We shall address the concrete analysis of the first order optimality conditions for \eqref{e-findim}-\eqref{sparseoptcontr} and their relationship to \eqref{e-infdim0}-\eqref{Ffun0} in a follow-up paper. This will be the basis for the numerical implementations.
\\

The paper is organized as follows. In Section 2 {we apply basic results recalled from the Appendix to ensure} the well-posedness of the finite dimensional system \eqref{e-findim}. Section 3 will be devoted to the mean-field limit of \eqref{e-findim} to the coupled system \eqref{e-infdim0} and the well-posedness of the latter. For the sake of self-containedness we sketch in Section 4 known existence results for the finite dimensional problems \eqref{e-findim}-\eqref{sparseoptcontr}. In Section 5 we develop our main result of $\Gamma$-convergence of the finite dimensional optimal control problems \eqref{e-findim}-\eqref{sparseoptcontr} to the corresponding infinite dimensional ones \eqref{e-infdim0}-\eqref{Ffun0}.
{The concluding Appendix recalls classical well-posedness results of Carath{\'e}odory differential equations and certain stability results of transport flows specifically formulated for the systems of equations \eqref{e-findim} and \eqref{e-infdim0}.}
\section{The finite-dimensional dynamics}

We state the following assumptions:
\begin{itemize}
\item[(H)] Let $H\colon \R^{2d} \to \R^{d}$ be a locally Lipschitz function such that, for a constant $C>0$
\begin{equation}\label{lingrowth}
|H(\xi)|\le C(1+|\xi|), \quad \mbox{for all } \xi \in \mathbb R^{2d};
\end{equation}
\end{itemize}

We consider now the system \eqref{e-findim} with $N$ followers and  the control $u$. We shall prove results of existence and uniqueness of the solution of \eqref{e-findim}, where time-dependent support estimates will be given independently on the number $N$ of followers. With this goal, we endow each space of configurations $\R^{2d(m+N)}$ with the following norm and the corresponding distance:
\begin{equation}
\| (y,w,x,v)\|:=\frac{1}{m}\sum_{k=1}^m(|y_k|+|w_k|)+\frac{1}{N}\sum_{i=1}^N (|x_i|+|v_i|),
\label{e-dconf}
\end{equation}
where the norm $| \cdot|$ on $\R^{d}$ is the Euclidean. The choice of  this norm \eqref{e-dconf} will be eventually related to the use of the $1$-Wasserstein distance on the space $\PP(\R^{2d})$ of probability measures of bounded first moment. For the sake of a compact writing we shall denote
the trajectories of \eqref{e-findim} by $\zeta(t) = (y(t),w(t),x(t),v(t))$ and trajectories of leaders or followers by $\xi$, i.e., $\xi(t)=(y(t),w(t))$ or $\xi(t)=(x(t),v(t))$ depending on the context. We can write \eqref{e-findim} in the following compact form
\begin{equation}
 \dot{\zeta}(t) = g(t, \zeta(t)),
\label{compactform}
\end{equation}
where the right-hand side is
\begin{equation}\label{explRHS}
g(t,\zeta) = \Big (w, [H \star (\mu_N + \mu_m)(y_k,w_k) + u_k]_{k=1}^m, v ,  [H \star (\mu_N + \mu_m)(x_i,v_i)]_{i=1}^N \Big ).
\end{equation}
\begin{lemma}\label{lem:fd}
Given $H$ satisfying condition (H) and $\mu_n = \frac{1}{n} \sum_{\ell=1}^n \delta_{\xi_\ell}$ for $\xi_\ell \in \mathbb R^{2d}$ for all $\ell=1,\dots,n$, an arbitrary atomic measure, we have
\begin{equation}
| H \star \mu_n (\xi) | \leq C \left (1 + |\xi| + \frac{1}{n} \sum_{\ell=1}^n | \xi_n| \right ).
\end{equation}
\end{lemma}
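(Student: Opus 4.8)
The plan is simply to unwind the definition of the convolution of $H$ against the atomic measure $\mu_n$ and then invoke the sublinear growth hypothesis (H) summand by summand. First I would record that, with $\mu_n=\frac1n\sum_{\ell=1}^n\delta_{\xi_\ell}$ and the convention $(H\star\mu)(\xi)=\int_{\R^{2d}}H(\xi-\eta)\,d\mu(\eta)$ used implicitly in \eqref{startsys} (so that e.g. for the Cucker--Smale kernel one recovers $\frac1n\sum_\ell a(|\xi-\xi_\ell|)$-type averages), one has $H\star\mu_n(\xi)=\frac1n\sum_{\ell=1}^n H(\xi-\xi_\ell)$. This reduces the claim to estimating a finite average of values of $H$.

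Next I would apply the triangle inequality to pull the modulus inside the finite sum, giving $|H\star\mu_n(\xi)|\le \frac1n\sum_{\ell=1}^n |H(\xi-\xi_\ell)|$. To each term I would apply the growth bound \eqref{lingrowth}, obtaining $|H(\xi-\xi_\ell)|\le C(1+|\xi-\xi_\ell|)$, and then the subadditivity of the norm $|\xi-\xi_\ell|\le|\xi|+|\xi_\ell|$. Averaging over $\ell$ and keeping the constant and the $|\xi|$ term outside the sum yields exactly $|H\star\mu_n(\xi)|\le C\bigl(1+|\xi|+\frac1n\sum_{\ell=1}^n|\xi_\ell|\bigr)$, which is the asserted inequality (read with index $\ell$ in place of the misprinted $n$ in the summand).

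There is essentially no obstacle here: the estimate is a direct consequence of the linearity of the integral against a convex combination of Dirac masses together with the subadditivity of the norm, and the local Lipschitz property of $H$ plays no role at this stage, only the global sublinear bound \eqref{lingrowth} being used. The single point deserving a line of care is the normalization: the factor $1/n$ built into $\mu_n$ is precisely what turns the raw sum into the average $\frac1n\sum_\ell|\xi_\ell|$ rather than an unnormalized sum, and this is what later renders the bound \emph{uniform in the number of atoms} and hence usable in the support estimates that are to be independent of $N$.
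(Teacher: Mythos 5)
Your proof is correct and follows exactly the paper's own argument: expand $H\star\mu_n(\xi)=\frac{1}{n}\sum_{\ell=1}^n H(\xi-\xi_\ell)$, apply the triangle inequality and the sublinear growth bound \eqref{lingrowth} term by term, and use $|\xi-\xi_\ell|\le|\xi|+|\xi_\ell|$ before averaging. You are also right that the summand $|\xi_n|$ in the stated inequality is a misprint for $|\xi_\ell|$.
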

\begin{proof}
By sublinear growth of $H$ we have immediately the estimate
\begin{eqnarray*}
| H \star \mu_n (\xi) | &\leq& \frac{1}{n} \sum_{\ell=1}^n |H(\xi - \xi_\ell)| \leq C( 1 + |\xi| + \frac{1}{n} \sum_{\ell=1}^n | \xi_n|).
\end{eqnarray*}
\end{proof}

\begin{proposition}\label{exsol}
Let $H$ be a map satisfying (H). Then given a control $u \in L^1([0,T],\U)$ and an initial datum $\zeta^0=(y^0,w^0,x^0,v^0)$ there exists a unique Carath{\'e}odory solution $\zeta(t)=(y(t),w(t),x(t),v(t))$ of \eqref{e-findim} such that
\begin{equation}\label{growth}
\| \zeta(t) \| \leq (\| \zeta^0\| + \bar C T) e^{\bar C T},
\end{equation}
for all $t \in [0,T]$, where $\bar C>0$ is a constant depending on $C>0$, $U>0$ but not depending on $N$. Moreover, the trajectory is Lipschitz continuous in time, i.e., 
\begin{equation}\label{lipcont}
\| \zeta(t_1) -  \zeta(t_2)\| \leq \mathcal L |t_1 - t_2|, \quad \mbox{ for all } t_1,t_2 \in [0,T],
\end{equation}
for the Lipschitz constant $\mathcal L = \bar C(1+(\| \zeta^0\| + \bar C T) e^{\bar C T})$. 
\end{proposition}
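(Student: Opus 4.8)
The plan is to regard the compact form \eqref{compactform} as a Carath\'eodory ODE and invoke the standard existence–uniqueness theory recalled in the Appendix; the only nonroutine feature is that every estimate must be tracked carefully to certify that the governing constant $\bar C$ does \emph{not} deteriorate as $N\to\infty$.

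First I would verify the Carath\'eodory hypotheses on the right-hand side $g$ of \eqref{explRHS}. For fixed $\zeta$ the map $t\mapsto g(t,\zeta)$ is measurable, since the sole time-dependence enters through the additive control block $u_k(t)$ and $u\in L^1([0,T],\U)$ is measurable by assumption. For a.e.\ fixed $t$ the map $\zeta\mapsto g(t,\zeta)$ is locally Lipschitz: each convolution term $H\star(\mu_N+\mu_m)(\xi)$ is a finite average of evaluations $H(\xi-\xi_\ell)$ of the locally Lipschitz kernel $H$, where both the base point $\xi$ and the atoms $\xi_\ell$ are components of $\zeta$, so the term is locally Lipschitz in $\zeta$ with a local Lipschitz constant independent of $t$ (the control contributes only a $t$-dependent translation). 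In particular this constant is bounded on bounded sets, hence trivially integrable on $[0,T]$, which is exactly what the Carath\'eodory uniqueness statement requires.

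The heart of the argument is a sublinear a priori bound on $g$ measured in the weighted norm \eqref{e-dconf}. Applying Lemma \ref{lem:fd} to each of $\mu_N$ and $\mu_m$ and using the triangle inequality, I would bound $|H\star(\mu_N+\mu_m)(\xi)|\le 2C(1+|\xi|+\|\zeta\|)$, after noting that the averaged moments $\frac1N\sum_i|(x_i,v_i)|$ and $\frac1m\sum_k|(y_k,w_k)|$ produced by Lemma \ref{lem:fd} are each dominated by $\|\zeta\|$, precisely because the norm \eqref{e-dconf} carries the matching $\frac1N$ and $\frac1m$ weights. Summing the blocks of $g$ against these same weights — the velocity blocks contribute at most $\|\zeta\|$, the kernel blocks at most $\bar C(1+\|\zeta\|)$, and the control block $\frac1m\sum_k|u_k|$ is bounded by a constant depending only on $U$ since $\U\subset B(0,U)$ — yields
\begin{equation*}
\|g(t,\zeta)\|\le \bar C\,(1+\|\zeta\|),
\end{equation*}
with $\bar C$ depending on $C$ and $U$ but, crucially, not on $N$: the $\frac1N$ averaging of the follower block exactly cancels the growth in their number. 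This linear-growth estimate simultaneously excludes finite-time blow-up, so the local Carath\'eodory solution extends to all of $[0,T]$, and it is the input for the quantitative bounds.

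Finally, the two displayed estimates follow by integrating \eqref{compactform}. Writing $\zeta(t)=\zeta^0+\int_0^t g(s,\zeta(s))\,ds$ and inserting the growth bound gives $\|\zeta(t)\|\le \|\zeta^0\|+\bar C t+\bar C\int_0^t\|\zeta(s)\|\,ds$, whence Gronwall's inequality produces \eqref{growth}. For the time-regularity \eqref{lipcont} I would estimate $\|\zeta(t_1)-\zeta(t_2)\|\le\int_{t_1}^{t_2}\|g(s,\zeta(s))\|\,ds$ and combine the growth bound with the uniform estimate \eqref{growth} on $\|\zeta(s)\|$, obtaining the stated constant $\mathcal L=\bar C(1+(\|\zeta^0\|+\bar C T)e^{\bar C T})$. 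The one step requiring genuine care is the $N$-uniformity established in the third paragraph, which is the structural reason the later mean-field passage will succeed; the remainder is a routine application of Carath\'eodory theory and Gronwall's lemma.
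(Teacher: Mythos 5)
Your proof is correct and takes essentially the same route as the paper's: establish the $N$-uniform linear growth bound $\|g(t,\zeta)\|\le \bar C(1+\|\zeta\|)$ via Lemma \ref{lem:fd} and condition (H), invoke the Carath\'eodory theory of Theorem \ref{cara-global} in the Appendix (whose Gronwall estimate yields \eqref{growth}), and integrate the equation to obtain \eqref{lipcont}. Your explicit verification of measurability in $t$ and of the local Lipschitz hypothesis needed for uniqueness is a useful detail that the paper's terser proof leaves implicit when it cites Theorem \ref{cara-global} for ``well-posedness.''
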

\begin{proof}
Given the explicit form of \eqref{explRHS} and thanks to condition (H) and Lemma \ref{lem:fd}, the right-hand side $g(t,\zeta)$ of the system \eqref{compactform} fulfills the linear growth condition
$$
\|g(s,\zeta)\| \leq \bar C (1 + \|\zeta\|), \quad \mbox{for all } \zeta \in \mathbb R^{2d},
$$
allowing us to apply Theorem \ref{cara-global} in the Appendix, which ensures the well-posedness of \eqref{e-findim}.
Moreover,
$$
\| \zeta(t_1) - \zeta(t_2) \| = \left \|  \int_{t_2}^{t_1} g(s,\zeta(s)) ds \right \| \leq  \int_{t_2}^{t_1} \bar C (1 + \|\zeta(s)\|) ds \leq \bar C(1+(\| \zeta^0\| + \bar C T) e^{\bar C T})  |t_1-t_2|.
$$
\end{proof}

\section{The coupled ODE and PDE system}

In the following we consider the space $\mathcal P_1(\mathbb R^n)$, consisting of all probability measures on $\mathbb R^n$ of finite
first moment. On this set we shall consider the following distance, called the {\it Monge-Kantorovich-Rubistein distance},
\begin{equation}\label{mkrdist}
\mathcal W_1(\mu,\nu)=\sup \left \{ \left | \int_{\mathbb R^n} \varphi(x) d (\mu-\nu)(x)  \right| : \varphi \in \operatorname{Lip}(\mathbb R^n), \quad \operatorname{Lip}(\varphi) \leq 1 \right \}, 
\end{equation}
where $\operatorname{Lip}(\mathbb R^n)$ is the space of Lipschitz continuous functions on $\mathbb R^n$ and $\operatorname{Lip}(\varphi)$ the Lipschitz constant of a function $\varphi$. 
Such a distance can also be represented in terms of optimal transport plans by Kantorovich duality in the following manner: if we denote $\Pi(\mu,\nu)$ the set of transference plans
between the probability measures $\mu$ and $\nu$, i.e., the set of probability measures on $\mathbb R^n \times \mathbb R^n$ with first and second marginals equal to $\mu$ and
$\nu$ respectively, then we have
\begin{equation}\label{wasserstein}
\mathcal W_1(\mu,\nu) =\inf_{\pi \in \Pi(\mu,\nu)} \left  \{ \int_{\mathbb  R^n \times \mathbb R^n} |x-y| d \pi(x,y) \right \}.
\end{equation}
In the form \eqref{wasserstein} the distance $\mathcal W_1$ is also known as the $1$-Wasserstein distance. 
We refer to \cite{AGS,vi09} for more details. Notice that if $\mu_m = \frac{1}{m} \sum_{k=1}^m \delta_{\xi_k}$ and  $\mu_m' = \frac{1}{m} \sum_{k=1}^m \delta_{\xi_k'}$ are two atomic measures, then \eqref{mkrdist} immediately yields
\begin{equation}\label{wasserest}
\mathcal W_1(\mu_m,\mu_m') \leq \frac{1}{m} \sum_{k=1}^m | \xi_k - \xi_k'|.
\end{equation}
This is the reason for having fixed the norm notation $\|\cdot\|$ as in \eqref{e-dconf}.
\\

We formally define now a proper concept of solutions for the system \eqref{e-infdim0}.

\begin{definition}\label{soldef}
 Let $u\in L^1([0,T],\U)$ be given. We say that a map $(y,w,\mu):[0,T]\to \mathcal X:=\R^{2d \times m}\times \PP(\R^{2d})$ is a solution of the  controlled system with interaction kernel $H$ 
\begin{equation}
\begin{cases}
\dot y_k = w_k, & \\
\dot w_k =  H\star(\mu+\mu_m)(y_k,w_k)+u_k, & k=1,\dots m, \quad t\in[0,T]\\
\partial_t\mu+v \cdot \nabla_x \mu = \nabla_v \cdot \left [ \left ( H \star (\mu +\mu_m) \right ) \mu \right ],
\end{cases}
\label{e-infdim}
\end{equation}
with control $u$, where $\mu_m$ is the time-dependent atomic measure as in \eqref{atomicleaders}, if
\begin{itemize}
\item[(i)] the measure $\mu$ is equi-compactly supported in time, i.e., there exists $R>0$ such that $\supp(\mu(t))\subset B(0,R)$ for all $t\in[0,T]$;
\item[(ii)] the solution is continuous {in time} with respect to the following metric in $\mathcal X$
\begin{equation}
\|(y,w,\mu)-(y',w',\mu')\|_{\mathcal X}:=\frac{1}{m}\sum_{k=1}^m(|y_k-y'_k|+|w_k-w'_k|)+\mathcal W_1(\mu,\mu'),
\label{e-dX}
\end{equation}
where $\mathcal W_1(\mu,\mu')$ is the 1-Wasserstein distance in $\PP(\R^{2d})$;
\item[(iv)] the $(y,w)$ coordinates define a Carath{\'e}odory solution of the following controlled problem with interaction kernel $H$, control $u(\cdot)$, and the external field $H\star\mu$:
\begin{equation} \label{s-findim}
\begin{cases}
\dot y_k = w_k, & \\
\dot w_k =  H\star(\mu+ \mu_m)(y_k,w_k)+u_k, & k=1,\dots m,, \quad t\in[0,T]
\end{cases}
\end{equation}
\item[(v)] the $\mu$ component satisfies
\begin{equation}\label{solution}
\frac{d}{dt}\int_{\R^{2d}}\phi(x,v)\,d\mu(t)(x,v)= \int_{\R^{2d}}\nabla \phi(x,v)\cdot \omega_{H, \mu, y,w}(t,x,v)\,d\mu(t)(x,v)
\end{equation}
for every $\phi \in C^\infty_c(\R^d \times \R^d)$, in the sense of distributions, where $\omega_{H,\mu,y,w}(t,x,v):[0,T]\times\R^d\times\R^d\to \R^d\times\R^d$ is the time-varying vector field defined as follows
\begin{equation}
\omega_{H, \mu, y,w}(t,x,v):=(v, H\star \mu(t)(x,v)+ H\star \mu_m(t)(x,v)).
\label{e-w}
\end{equation}
\end{itemize}
Let moreover $(y^0,w^0,\mu^0)\in \mathcal X$ be given, with {$\mu^0 \in \mathcal P_1(\mathbb R^{2d})$ of bounded support}. We say that $(y,w,\mu):[0,T]\to \mathcal X$ is a solution of \eqref{e-infdim} with initial data $(y^0,w^0,\mu^0)$ and control $u$ if it is a solution of \eqref{e-infdim} with control $u$ and it satisfies $(y(0),w(0),\mu(0))=(y^0,w^0,\mu^0)$.
\end{definition}

{Following} the well-known arguments in \cite[Section 8.1]{AGS}, once $\mu_m(t)$ is a fixed time-dependent atomic measure of the type \eqref{atomicleaders}, a measure $\mu(t)$ is a weak equi-compactly supported solution of
\begin{equation}\label{onlyPDE}
\partial_t\mu+v \cdot \nabla_x \mu = \nabla_v \cdot \left [ \left ( H \star (\mu +\mu_m) \right ) \mu \right ]
\end{equation}
in the sense of (v) in the above definition if and only if it satisfies (i) and the measure-theoretical fixed point equation
\begin{equation}\label{pushf}
\mu(t)=({\mathcal T}^{\mu,\mu_m}_t)_\sharp\mu^0,
\end{equation}
with $\mu_0:=\mu(0)$ and ${\mathcal T}^{\mu,\mu_m}_t$ is the flow function defined by \eqref{definitflow} in the Appendix. Here $({\mathcal T}^{\mu,\mu_m}_t)_\sharp$ denotes the push-forward of $\mu_0$ through ${\mathcal T}^{\mu,\mu_m}_t$.
\\

Before actually proving the existence of solutions of \eqref{e-infdim} as in Definition \ref{soldef}, it will be convenient to address the stability of the system \eqref{e-infdim} first.

\begin{proposition}
Let $u \in L^1([0,T],\U)$ be a given fixed control for \eqref{e-infdim} and two solutions $(y^1,w^1,\mu^1)$ and $(y^2,w^2,\mu^2)$ of  \eqref{e-infdim} relative to the control $u$ and 
given respective initial data $(y^{0,i},w^{0,i},\mu^{0,i}) \in \mathcal X$,  with $\mu^{0,i}$ compactly supported, $i=1,2$. Then there exists a constant $C_T>0$ such that
\begin{equation}
\| (y^1(t),w^1(t),\mu^1(t)) - (y^2(t),w^2(t),\mu^2(t)) \|_{\mathcal X} \leq C_T  \| (y^{0,1},w^{0,1},\mu^{0,1}) - (y^{0,2},w^{0,2},\mu^{0,2}) \|_{\mathcal X}, \quad \mbox{for all } t \in [0,T]
\end{equation}
\end{proposition}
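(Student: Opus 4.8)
The plan is to control the leader component (a difference of ODE solutions) and the follower component (a $1$-Wasserstein distance) at the same time, and to close a \emph{single} Gronwall inequality for the total distance. Write
\[
D(t):=\|(y^1(t),w^1(t),\mu^1(t))-(y^2(t),w^2(t),\mu^2(t))\|_{\mathcal X}=D_L(t)+D_F(t),
\]
with leader part $D_L(t):=\frac1m\sum_{k=1}^m(|y^1_k-y^2_k|+|w^1_k-w^2_k|)$ and follower part $D_F(t):=\WW(\mu^1(t),\mu^2(t))$. As a preliminary step I would fix a common compact set: by condition (i) of Definition \ref{soldef} both $\mu^1(t),\mu^2(t)$ are equi-compactly supported, and by the a priori growth bound (argued exactly as in Proposition \ref{exsol}, using compactness of $\U$ and the sublinear bound (H)) the leader trajectories, hence $\supp(\mu_m^i(t))$, remain in a ball $B(0,R)$ for all $t\in[0,T]$, $i=1,2$, with $R$ depending only on $T$ and the initial data. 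Since $H$ is locally Lipschitz, it is $L$-Lipschitz on $B(0,2R)$; this single constant $L$ drives every estimate, and in particular each convolved field $H\star(\mu^i+\mu_m^i)$ is $L$-Lipschitz on $B(0,R)$.

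For the leaders I would subtract the two equations for each index $k$. The control $u_k$ is identical in both solutions and \emph{cancels}, leaving $\frac{d}{dt}(w^1_k-w^2_k)=H\star(\mu^1+\mu_m^1)(y^1_k,w^1_k)-H\star(\mu^2+\mu_m^2)(y^2_k,w^2_k)$. Adding and subtracting $H\star(\mu^1+\mu_m^1)(y^2_k,w^2_k)$ splits this into two pieces: the first is bounded by $L(|y^1_k-y^2_k|+|w^1_k-w^2_k|)$ through Lipschitz continuity of the fixed field; the second, $|H\star(\mu^1+\mu_m^1)(y^2_k,w^2_k)-H\star(\mu^2+\mu_m^2)(y^2_k,w^2_k)|$, is handled by the dual formulation \eqref{mkrdist}, since for fixed $\xi$ the map $\eta\mapsto H(\xi-\eta)$ is $L$-Lipschitz on $B(0,2R)$, whence $|H\star\mu^1(\xi)-H\star\mu^2(\xi)|\le L\,\WW(\mu^1,\mu^2)$ and likewise $|H\star\mu_m^1(\xi)-H\star\mu_m^2(\xi)|\le L\,\WW(\mu_m^1,\mu_m^2)\le L\,D_L(t)$ by \eqref{wasserest}. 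Together with $\frac{d}{dt}(y^1_k-y^2_k)=w^1_k-w^2_k$, summing over $k$ and averaging yields $\frac{d}{dt}D_L(t)\le (1+L)D_L(t)+L\,D(t)$.

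For the followers I would use the flow representation \eqref{pushf}, $\mu^i(t)=(\mathcal T^{\mu^i,\mu_m^i}_t)_\sharp\mu^{0,i}$. Let $\pi_0$ be an optimal transference plan between $\mu^{0,1}$ and $\mu^{0,2}$. Because $(\mathcal T^{\mu^1,\mu_m^1}_t\times\mathcal T^{\mu^2,\mu_m^2}_t)_\sharp\pi_0$ has marginals $\mu^1(t),\mu^2(t)$, it is an admissible (not necessarily optimal) plan, so, writing $(x^i(t),v^i(t)):=\mathcal T^{\mu^i,\mu_m^i}_t(z_i)$,
\[
D_F(t)\le \Phi(t):=\int_{\R^{2d}\times\R^{2d}}\bigl(|x^1(t)-x^2(t)|+|v^1(t)-v^2(t)|\bigr)\,d\pi_0(z_1,z_2).
\]
Differentiating the two characteristic curves and performing the \emph{same} add-and-subtract splitting as for the leaders gives the pointwise bound $\frac{d}{dt}\bigl(|x^1-x^2|+|v^1-v^2|\bigr)\le (1+L)\bigl(|x^1-x^2|+|v^1-v^2|\bigr)+L\,D(t)$, the measure-difference terms again converting to $L\WW(\mu^1,\mu^2)+L\WW(\mu_m^1,\mu_m^2)\le L\,D(t)$. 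Integrating against the fixed plan $\pi_0$ yields $\frac{d}{dt}\Phi(t)\le(1+L)\Phi(t)+L\,D(t)$, with $\Phi(0)\le\sqrt2\,\WW(\mu^{0,1},\mu^{0,2})$ up to the constant relating the Euclidean and $\ell^1$ costs on $\R^{2d}$.

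Setting $E(t):=D_L(t)+\Phi(t)$ and noting $D(t)\le E(t)$, I would add the two differential inequalities to obtain $\frac{d}{dt}E(t)\le (1+L)E(t)+2L\,D(t)\le(1+3L)E(t)$, so Gronwall gives $E(t)\le e^{(1+3L)T}E(0)$; since $D(t)\le E(t)$ and $E(0)\le c\,D(0)$ for a universal $c$, the claim follows with $C_T=c\,e^{(1+3L)T}$. The main obstacle is not any individual estimate — each is a routine Lipschitz/Gronwall bound — but organizing the two-way coupling so that one inequality closes: the leaders feed the followers through $\mu_m$, the followers feed the leaders through $\mu$, and both fields convolve the unknown itself, so neither component can be estimated in isolation. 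The device that resolves this is that \emph{every} measure-difference term is dominated by the full distance $D(t)$ via the dual characterization of $\WW$ together with the atomic bound \eqref{wasserest}, allowing $D_L$ and $\Phi$ to be controlled by the common quantity $E$. A secondary technical point, deferred to the Appendix stability results for the flow $\mathcal T$, is the justification that the characteristics are well-defined absolutely continuous curves remaining in $B(0,R)$, so that the pointwise differentiation and the differentiation under the $\pi_0$-integral are legitimate.
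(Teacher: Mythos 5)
Your proposal is correct, and it reaches the stability estimate by the same overall architecture as the paper (split the $\mathcal X$-distance into a leader part and a follower part, convert every measure difference into Wasserstein terms dominated by the full distance, close with Gronwall), but the key step for the follower part is genuinely different. The paper estimates $\WW(\mu^1(t),\mu^2(t))$ through the push-forward representation \eqref{pushf} plus a triangle inequality that separates ``same flow, two initial measures'' from ``two flows, same initial measure'': the first piece is handled by the Lipschitz push-forward estimate \eqref{transport} of Lemma \ref{primstim}, the second by the sup-norm bound of Lemma \ref{primstim} combined with the two-flow comparison \eqref{contflow} from the Appendix, and finally Lemma \ref{secstim} converts the field differences back into $\WW$-distances; the result is an integral inequality, then Gronwall. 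You instead use a coupling (Dobrushin-type) argument: push an optimal plan $\pi_0$ between the initial measures forward by the product of the two flow maps, obtaining an admissible plan at time $t$, and differentiate the transport cost along paired characteristics, performing the same add-and-subtract splitting as for the leaders. This buys self-containedness — you never need \eqref{contflow} or Lemma \ref{primstim}, only Kantorovich duality, \eqref{wasserest}, and ODE estimates — and it yields a single clean differential inequality treating both couplings (leaders feeding $\mu_m$ into the followers, followers feeding $\mu$ into the leaders) symmetrically; the price is the technical care you correctly flag (differentiation under the $\pi_0$-integral, or equivalently passing to the integral form first, and the $\ell^1$ versus Euclidean cost constants), whereas the paper's route modularizes these issues into Appendix lemmas that are reused elsewhere, e.g.\ in the proof of Theorem \ref{p-infdimsol}. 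Two cosmetic remarks: your bound $|H\star\mu^1(\xi)-H\star\mu^2(\xi)|\le L\,\WW(\mu^1,\mu^2)$ needs the duality \eqref{mkrdist} applied componentwise to the vector-valued $H$ (this is exactly what Lemma \ref{secstim} packages, so you could simply cite it), and your a priori confinement of the leader trajectories should invoke the sublinear bound \eqref{trzu} of Lemma \ref{stimesceme} for the convolved field, as in Proposition \ref{exsol}; neither point is a gap.
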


\begin{proof} We show the stability estimate by  chaining the stability of \eqref{s-findim} with the one of  \eqref{onlyPDE}.
Let us first address the stability of \eqref{s-findim} given $\mu^1,\mu^2$. By integration we have 
\begin{equation}\label{firstest}
|y_k^1(t) - y_k^2(t)| \leq |y_k^{0,1} - y_k^{0,2}| + \int_0^t |w_k^{1}(s) - w_k^{2}(s)| ds,
\end{equation}
and, by Lemma \ref{secstim}, there exists a constant $L_R >0$, such that  
\begin{eqnarray}
 |w_k^1(t) - w_k^2(t)| &\leq& |w_k^{0,1} - w_k^{0,2}| + \int_0^t \left ( |(H\star{\mu^1}(y_k^1(s),w_k^1(s)) - (H\star{\mu^2}(y_k^2(s),w_k^2(s)) | \right . \nonumber \\ 
&& \phantom{XXXXXXXXXX} \left. +(H\star{\mu^1_m}(y_k^1(s),w_k^1(s)) - (H\star{\mu^2_m})(y_k^2(s),w_k^2(s)) | ds \right ), \nonumber \\
&\leq&  |w_k^{0,1} - w_k^{0,2}| + L_R \int_0^t \Big [ \left ( \frac{1}{m} \sum_{k=1}^m |y_k^1(s) - y_k^2(s)|+|w_k^1(s) - w_k^2(s)| \right ) \nonumber \\
&& \phantom{XXXXXXXXXX}  +\mathcal W_1(\mu^1(s),\mu^2(s)) \Big ] ds \label{secondest}
\end{eqnarray}
Now we consider the stability of \eqref{onlyPDE} given $\mu^1_m,\mu^2_m$. In view of the representation \eqref{pushf} of solutions by means of mass transportation, there exist  constants $\mathcal L>0$, $L_R>0$, and $\rho >0$
such that
\begin{eqnarray}
\mathcal W_1(\mu^1(t), \mu^2(t)) &=& \mathcal W_1(({\mathcal T}^{\mu^1,\mu_m^1}_t)_\sharp\mu^{0,1}, ({\mathcal T}^{\mu^2,\mu_m^2}_t)_\sharp\mu^{0,2})  \nonumber \\
&\leq & W_1(({\mathcal T}^{\mu^1,\mu_m^1}_t)_\sharp\mu^{0,1}, ({\mathcal T}^{\mu^1,\mu_m^1}_t)_\sharp\mu^{0,2}) + W_1(({\mathcal T}^{\mu^1,\mu_m^1}_t)_\sharp\mu^{0,2}, ({\mathcal T}^{\mu^2,\mu_m^2}_t)_\sharp\mu^{0,2})  \nonumber \\
&\leq & \mathcal L \mathcal W_1 (\mu^{0,1},\mu^{0,2}) + \mathcal W_1(({\mathcal T}^{\mu^1,\mu_m^1}_t)_\sharp\mu^{0,2}, ({\mathcal T}^{\mu^2,\mu_m^2}_t)_\sharp\mu^{0,2})  \nonumber \\
&\leq & \mathcal L \mathcal W_1 (\mu^{0,1},\mu^{0,2}) + \| {\mathcal T}^{\mu^1,\mu_m^1}_t - {\mathcal T}^{\mu^2,\mu_m^2}_t \|_{L^\infty(B(0,R))} \nonumber \\
&\leq & \mathcal L \mathcal W_1 (\mu^{0,1},\mu^{0,2}) + \int_0^t e^{\mathcal L (s-t)} \|(H\star \mu^1(s)-H\star \mu^2(s))  \nonumber \\
&& \phantom{XXXXXXXXXXX} + (H\star \mu^1_m(s)-H\star \mu^2_m(s))\|_{L^\infty(B(0,\rho))}\,ds  \nonumber  \\
&\leq & \mathcal L \mathcal W_1 (\mu^{0,1},\mu^{0,2}) + L_R \int_0^t e^{\mathcal L (s-t)} \Big [ \left ( \frac{1}{m} \sum_{k=1}^m |y_k^1(s) - y_k^2(s)|+|w_k^1(s) - w_k^2(s)| \right )  \nonumber  \\
&&  \phantom{XXXXXXXXXXX}  + \mathcal W_1(\mu^1(s),\mu^2(s)) \Big ]\,ds, \label{thirdest}
\end{eqnarray}
where we first applied the triangle inequality, in the second inequality we used the Lipschitz continuity of the flow map ${\mathcal T}^{\mu^1,\mu_m^1}_t$ given by \eqref{contflow} for $\mu^1 = \mu_2$ and $\mu_m^1=\mu_m^2$, and 
 Lemma \ref{primstim} also for the third inequality, the fourth inequality is again a consequence of \eqref{contflow}, and the last one again due to an application of Lemma \ref{secstim}. By combining \eqref{firstest}, \eqref{secondest}, and \eqref{thirdest}, and recalling the definition of the norm $\| \cdot\|_{\mathcal X}$, we easily recognize and conclude  the estimate
\begin{eqnarray*}
&&\| (y^1(t),w^1(t),\mu^1(t)) - (y^2(t),w^2(t),\mu^2(t)) \|_{\mathcal X} \leq\\
& \leq &  C_0 \left ( \| (y^{0,1},w^{0,1},\mu^{0,1}) - (y^{0,2},w^{0,2},\mu^{0,2}) \|_{\mathcal X} + \int_0^t \| (y^1(s),w^1(s),\mu^1(s)) - (y^2(s),w^2(s),\mu^2(s)) \|_{\mathcal X} ds \right ),
\end{eqnarray*}
for a suitable constant $C_0>0$ depending on $\mathcal L, L_R, T$. An application of Gronwall's inequality concludes the stability estimate.
\end{proof}

This latter result also implies that, once a control $u \in L^1([0,T],\U)$ is fixed, the solution of \eqref{e-infdim}, if it exists, is uniquely determined by the initial conditions.
We shall derive now the existence of solutions of \eqref{e-infdim} in the sense of Definition \ref{soldef} by a limit process for $N \to \infty$ where we allow for a variable control
$u_N$ depending on $N$.

\begin{theorem} \label{p-infdimsol} Let $(y^0,w^0,\mu^0)\in {\mathcal X}$ be given, with $\mu^0$ of bounded support in $B(0,R)$, for $R>0$. Define a sequence $(\mu_N^0)_{N \in \mathbb N}$ of atomic probability measures equi-compactly supported in $B(0,R)$ such that each $\mu^0_N$ is given by $\mu^0_N:=\sum_{i=1}^N \delta_{x^0_{i,N},v^0_{i,N}}$ and  $\lim_{N \to \infty} \mathcal W_1(\mu_N^0,\mu^0)=0$. Fix now a weakly convergent sequence  $(u_N)_{N \in \mathbb N} \subset  L^1([0,T],\U)$ of control functions, i.e.,  $u_N \rightharpoonup u_*$ in $L^1([0,T],\U)$. For each initial datum $\zeta_N^0=(y^0,w^0,x^0_{N},v^0_{N})$ depending on $N$, denote with $\zeta_N(t)=(y_N(t),w_N(t),\mu_N(t)):=(y_N(t),w_N(t),x_N(t),v_N(t))$ the unique solution of the finite-dimensional control problem \eqref{e-findim} with control $u_N$. {(Here we apply the identification of the trajectories $(x(t),v(t))$ and the measure $\mu_N(t)$ by means of \eqref{muNfoll}.)}  Then, the sequence $(y_N,w_N,\mu_N)$ converges in $C^0([0,T],{\mathcal X})$ to some $(y_*,w_*,\mu_*)$, which is a solution of \eqref{e-infdim} with initial data $(y^0,w^0,\mu^0)$ and control $u_*$, in the sense of Definition \ref{soldef}
\end{theorem}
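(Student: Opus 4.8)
The plan is to derive uniform-in-$N$ estimates, extract a convergent subsequence by an Arzel\`a--Ascoli argument, pass to the limit in the integral formulations of both the leader ODE and the follower PDE, and finally upgrade subsequential convergence to convergence of the whole sequence using the uniqueness granted by the stability estimate proved above. First I would collect the a priori bounds. Since the $\mu_N^0$ are equi-compactly supported in $B(0,R)$ with $\WW(\mu_N^0,\mu^0)\to 0$, the initial norms $\|\zeta_N^0\|$ are bounded uniformly in $N$; Proposition \ref{exsol} then provides a uniform bound $\|\zeta_N(t)\|\le R'$ and an equi-Lipschitz-in-time estimate with a common constant $\mathcal L$, both independent of $N$. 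Through the identification \eqref{muNfoll} this means each $\mu_N(t)$ is supported in the fixed ball $B(0,R')$, and by \eqref{wasserest} the curves $t\mapsto\mu_N(t)$ are equi-Lipschitz in $\WW$, while the leader trajectories $(y_N,w_N)$ are uniformly bounded and equicontinuous. Arzel\`a--Ascoli then yields a subsequence (not relabelled) with $(y_N,w_N)\to(y_*,w_*)$ uniformly; since $(\PP(B(0,R')),\WW)$ is a compact metric space, the measure-valued version of Arzel\`a--Ascoli gives $\mu_N\to\mu_*$ in $C^0([0,T],(\PP(\R^{2d}),\WW))$. The limit inherits equi-compact support in $B(0,R')$ and Lipschitz continuity in $\WW$, which already secures conditions (i) and (ii) of Definition \ref{soldef}; I also record $\mu_m^N\to\mu_m^*$, a consequence of the uniform convergence of $(y_N,w_N)$ and \eqref{wasserest}.

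Next I would pass to the limit in the leader dynamics \eqref{s-findim} written in integral form. The control term is handled by the weak convergence $u_N\rightharpoonup u_*$: testing against $\mathbf 1_{[0,t]}\in L^\infty$ gives $\int_0^t u_N\to\int_0^t u_*$ for each $t$. The interaction term relies on the elementary but crucial estimate that, as $H$ is locally Lipschitz and all arguments lie in the fixed ball $B(0,R')$, the map $\eta\mapsto H(\xi-\eta)$ is Lipschitz there, so by Kantorovich duality \eqref{mkrdist} one has $|H\star\mu_N(\xi)-H\star\mu_*(\xi)|\le L_{R'}\WW(\mu_N,\mu_*)$ uniformly in $\xi\in B(0,R')$. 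Combined with $(y_N,w_N)\to(y_*,w_*)$ and $\mu_m^N\to\mu_m^*$, this gives strong convergence of the drift along the trajectories, so $(y_*,w_*)$ satisfies \eqref{s-findim} in the Carath\'eodory sense, i.e.\ condition (iv).

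The \emph{main obstacle} lies in passing to the limit in the weak formulation \eqref{solution}. Each $\mu_N$ satisfies the integrated identity $\int\phi\,d\mu_N(t)-\int\phi\,d\mu_N^0=\int_0^t\int\nabla\phi\cdot\omega_{H,\mu_N,y_N,w_N}(s,\cdot)\,d\mu_N(s)\,ds$ for all $\phi\in C^\infty_c(\R^d\times\R^d)$. The left-hand side converges since $\mu_N(t)\rightharpoonup\mu_*(t)$ and $\mu_N^0\rightharpoonup\mu^0$. The delicate point on the right is that the integrand depends on $\mu_N$ itself through the nonlinear convolution; I would therefore split the difference of integrals into a first part estimated by the uniform bound $\|\omega_{H,\mu_N,y_N,w_N}(s,\cdot)-\omega_{H,\mu_*,y_*,w_*}(s,\cdot)\|_{L^\infty(B(0,R'))}\to 0$, which again follows from the $L_{R'}\WW$ estimate together with $\mu_m^N\to\mu_m^*$ and $(y_N,w_N)\to(y_*,w_*)$, and a second part controlled by the weak convergence $\mu_N(s)\rightharpoonup\mu_*(s)$ tested against the fixed continuous compactly supported field $\nabla\phi\cdot\omega_{H,\mu_*,y_*,w_*}(s,\cdot)$. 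Uniform bounds on the integrand then justify passing the outer $s$-integral to the limit by dominated convergence, establishing condition (v).

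At this stage $(y_*,w_*,\mu_*)$ is a solution of \eqref{e-infdim} with data $(y^0,w^0,\mu^0)$ and control $u_*$. Since the stability estimate proved above forces such a solution to be unique, every subsequence of $(y_N,w_N,\mu_N)$ admits a further subsequence converging to this same limit; hence the entire sequence converges in $C^0([0,T],\mathcal X)$, which completes the argument.
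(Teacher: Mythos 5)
Your proposal is correct and follows essentially the same route as the paper's proof: uniform support/Lipschitz bounds from Proposition \ref{exsol}, Ascoli--Arzel\`a compactness in $C^0([0,T],\mathcal X)$, limit passage in the integral formulations of the leader ODE (weak $L^1$ convergence of $u_N$ plus the Wasserstein--Lipschitz estimate of Lemma \ref{secstim} for the convolution terms) and of the weak PDE formulation, and finally the subsequence-of-subsequence argument via the stability/uniqueness result to upgrade to full-sequence convergence (which the paper records in Remark \ref{fullsequence}). The only cosmetic differences are that you handle the PDE term by an add-and-subtract splitting with dominated convergence in time where the paper invokes convergence of the product measures $\mathcal L^1\llcorner_{[0,\hat t]}\times\frac{1}{\hat t}\mu_{N_k}(t)$ in $\mathcal P_1$, and you pass to the limit in the integrated ODE directly rather than via weak $L^1$ convergence of the derivatives $\dot w_{k,N}$; both variants are equivalent in substance.
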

\begin{proof} Since the initial data $\zeta_N^0$ are equi-compactly supported, the trajectories   $\zeta_N(t)=(y_N(t),w_N(t),\mu_N(t))$ are equibounded and equi-Lipschitz continuous in $C^0([0,T],{\mathcal X})$, because of \eqref{wasserest}, combined with \eqref{growth} and \eqref{lipcont}. By an application of the Ascoli-Arzel\`a theorem for functions on $[0,T]$ and values in the complete metric space ${\mathcal X}$, there exists a subsequence, again denoted by $\zeta_N(\cdot)=(y_N(\cdot),w_N(\cdot),\mu_N(\cdot))$ converging uniformly to a limit $\zeta_*=(y_*(\cdot),w_*(\cdot),\mu_*(\cdot))$, which is also equi-compactly supported in a ball $B(0,R_T)$ for a suitable $R_T>0$.  Due to equi-Lipschitz continuity in time of the trajectories and the continuity of the Wasserstein distance, we also obtain
$$
\| \zeta_*(t_2) - \zeta_*(t_1) \|_{\mathcal X} {= \lim_N} \| \zeta_N(t_2) - \zeta_N(t_1) \|_{\mathcal X} \leq L_T |t_2 - t_1|,
$$
for all $t_1,t_2 \in [0,T]$, where $L_T>0$ is a uniform Lipschitz constant. Hence, the limit trajectory $\zeta_*$ belongs as well to $C^0([0,T],{\mathcal X})$. 
\\

We need now to show that $\zeta_*$ is a solution of \eqref{e-infdim} in the sense of Definition \ref{soldef}. We first verify that $(y_*,w_*)$ is a solution of the ODEs part of  \eqref{e-infdim}
for $\mu = \mu_*$.\\
To this end we observe that the limit $\zeta_N \to \zeta_*$ in particular specifies into 
\begin{equation}\label{unif}
\left \{
\begin{array}{ll}
\xi_N \rightrightarrows \xi_*, & \mbox{ in } [0,T],\\
\dot \xi_N  \rightharpoonup \dot \xi_*, & \mbox{ in } L^1([0,T],\mathbb R^{2d}),\\
\end{array}
\right .
\end{equation}
where $\xi_N(t)=(y_N(t),w_N(t))$ and $\xi_*=(y_*,w_*)$ and 
\begin{equation}\label{convwas}
\lim_N \mathcal W_1 (\mu_N(t), \mu_*(t)) =0, 
\end{equation}
uniformly with respect to $t \in [0,T]$. In particular the limits \eqref{unif} imply $\dot y_{k,*}(t) = w_{k,*}(t)$ in $[0,T]$ for all $k=1,\dots,m$. We shall now show that  $(y_*(t),w_*(t))$ is actually the Carath{\'e}odory solution of \eqref{s-findim}
by verifying also its second equation.

Let us denote now
$$
\mu_{m,N}(t) = \frac{1}{m} \sum_{k=1}^m \delta_{(y_{k,N}(t),w_{k,N}(t))} \quad \mbox{and }  \mu_{m,*}(t) = \frac{1}{m} \sum_{k=1}^m \delta_{(y_{k,*}(t),y_{k,*}(t))}.
$$
As a consequence of \eqref{wasserest}, Lemma  \ref{secstim} in the Appendix, and the uniform convergence of the trajectories we have that 
\begin{equation}\label{Wasser}
\mathcal W_1 (\mu_{m,N}(t), \mu_{m,*}(t))\to 0
\end{equation}
as $N\to +\infty$, uniformly in $t\in [0,T]$. By \eqref{convwas}, \eqref{Wasser}, and the linear growth of $H$ we deduce
\begin{equation}\label{potr}
H \star (\mu_N + \mu_{m,N}) (y_{k,N},v_{k,N}) \rightrightarrows  H \star (\mu_* + \mu_{m,*}) (y_{k,*},w_{k,*}), \mbox{ in } [0,T], \quad \mbox{ for } N \to \infty,
\end{equation}
again by applying Lemma \ref{secstim} in the Appendix. 

To prove that $(y_*(t),w_*(t))$ is actually the Carath{\'e}odory solution of \eqref{s-findim}, we have only to show that for all $k=1,\dots,m$ one has
\begin{equation}\label{claim}
\dot w_{k,*} = ( H \star \mu_*  + \mu_{m,*})(y_{k,*},w_{k,*}) + u_{k,*}.
\end{equation}
This is clearly equivalent to the following: for every $\eta \in \R^d$ and every $\hat t\in [0,T]$ it holds
\begin{equation}\label{claim2}
\eta \cdot \int_0^{\hat t}\dot w_{k,*}(t)\,dt = \eta \cdot \int_0^{\hat t}[( H \star \mu_*(t)  + \mu_{m,*}(t))(y_{k,*}(t),w_{k,*}(t)) + u_{k,*}(t)]\,dt,
\end{equation} 
which follows from the weak $L^1$ convergence of $\dot w_{k,N}$ to $\dot w_{k,*}$ and of $u_N$ to $u_*$ for $N\to +\infty$, and from \eqref{potr}. 
\\

We are now left with verifying that $\mu_*$ is a solution of \eqref{onlyPDE} for $\mu_m = \mu_{m,*}$ in the sense of Definition \ref{soldef} (v).  For all $\hat t \in [0,T]$ and for all $\phi \in C_c^1(\mathbb R^{2d})$ we infer that
$$
\langle \phi, \mu_N(\hat t) - \mu_N(0) \rangle = \int_0^{\hat t} \left [ \int_{\mathbb R^{2d}} \nabla \phi(x,v) \cdot w_{H,\mu_N,y_N,w_N}(t,x,v) d \mu_N(t,x,v) \right ] dt,
$$
which is verified by considering the differentiation
\begin{eqnarray*} \frac{d}{dt} \langle \phi, \mu_N(t)\rangle &=&  \frac{1}{N} \frac{d}{dt} \sum_{i=1}^N \phi(x_i(t),v_i(t))\\
&=& \frac{1}{N} \left [ \sum_{i=1}^N \nabla_x \phi(x_i(t),v_i(t)) \cdot \dot x_i(t) + \sum_{i=1}^N \nabla_v \phi(x_i(t),v_i(t)) \cdot \dot v_i(t) \right],
\end{eqnarray*}
and directly applying the substitutions as in \eqref{e-findim} for the followers variables $(x,v)$.  Moreover,
\begin{equation}\label{prima}
\lim_{ N \to \infty} \langle \phi, \mu_{N}(\hat t) -\mu_{N}(0) \rangle =  \langle \phi, \mu_*(\hat t) -\mu^0 \rangle,
\end{equation}
for all $\phi \in C_c^1(\mathbb R^{2d})$.  By possibly extracting an additional subsequence, by weak-$*$ convergence, and the dominated convergence theorem, we obtain the limit
\begin{equation}\label{terza}
\lim_{k \to \infty} \int_0^{\hat t} \int_{\mathbb R^{2d}} (\nabla_v \phi(x,v) \cdot v ) d \mu_{N_k}(t)(x,v) dt =  \int_0^{\hat t} \int_{\mathbb R^{2d}} (\nabla_v \phi(x,v) \cdot v) d \mu_*(t)(x,v) dt,
\end{equation}
for all $\phi \in C_c^1(\mathbb R^{2d})$. By Lemma \ref{secstim} in Appendix we also have that for every $\rho >0$ 
$$
\lim_{k \to \infty}  \|H\star (\mu_{N_k}(t)+ \mu_{m,N}(t)) -H\star (\mu_*(t)+ \mu_{m,*}(t))\|_{L^\infty(B(0,\rho))} =0,
$$
and, as $\phi \in C_c^1(\mathbb R^{2d})$ has compact support, it follows that
$$
\lim_{k \to \infty}  \| \nabla_v \phi \cdot (H\star (\mu_{N_k}(t)+ \mu_{m,N_k}(t)) -H\star (\mu_*(t)+ \mu_{m,*}(t))) \|_\infty =0.
$$
Denote with $\mathcal L^1\llcorner_{[0,\hat t]}$ the Lebesgue measure on the time interval $[0,\hat t]$. Since the product measures $\mathcal L^1\llcorner_{[0,\hat t]} \times \frac{1}{\hat t} \mu_{N_k}(t)$ converge in $\mathcal P_1([0,\hat t] \times \mathbb R^{2d})$ to $\mathcal L^1\llcorner_{[0,\hat t]} \times \frac{1}{\hat t} \mu_*(t)$, we finally get
\begin{eqnarray}
&&\lim_{k \to \infty} \int_0^{\hat t} \int_{\mathbb R^{2d}} (\nabla_v \phi(x,v) \cdot H\star (\mu_{N_k}(t)+ \mu_{m,N_k}(t))) d \mu_{N_k}(t)(x,v) dt \nonumber  \\
&=&  \int_0^{\hat t} \int_{\mathbb R^{2d}} (\nabla_v \phi(x,v) \cdot H\star (\mu_*(t)+ \mu_{m,*}(t))) d \mu_*(t)(x,v) dt. \label{quarta}
\end{eqnarray}
The statement now follows by combining \, \eqref{prima},  \eqref{terza}, and \eqref{quarta}.

 \end{proof}

\begin{remark}\label{fullsequence}
In the proof of the previous theorem we consider a converging subsequence of $\zeta_N$ after application of the Ascoli-Arzel\`a theorem. Let us stress that in view of the uniqueness of the solution of \eqref{e-infdim}, we do not
need to restrict us to a subsequence, but we can infer the convergence of the entire sequence $\zeta_N$ to the solution of \eqref{e-infdim}. This observation will play an important role below, when we shall prove the $\Gamma$-convergence of
finite dimensional optimal control problems constrained by the ODE system \eqref{e-findim} to the infinite dimensional optimal control problem constrained by the ODE-PDE system \eqref{e-infdim}.
\end{remark}
\section{The finite-dimensional optimal control problem}

We state the following assumptions:
\begin{itemize}
\item[(L)]  Let $L: \mathcal X \to \mathbb R_+$
be a continuous function with respect to the distance induced on ${\mathcal X}$ by the norm $\| \cdot \|_{\mathcal X}$;
\end{itemize}
Given $N\in \mathbb N$ and an initial {datum} $(y_1(0),\ldots,y_m(0),w_1(0),\ldots,w_m(0),x_1(0), \dots, x_N(0), v_1(0), \dots, v_N(0)) \in (\mathbb R^d)^m \times(\mathbb R^d)^m \times(\mathbb R^d)^N \times (\mathbb R^d)^N$, 
we consider the following optimal control problem:
\begin{equation}\label{mainmod3}
\min_{u=(u_1,\dots,u_k) \in L^1([0,T],\U)} \int_{0}^T  \left\{ L(y(t),w(t),\mu_N(t))   +  \frac{1}{m} \sum_{k=1}^m |u_k(t)|  \right \}dt,
\end{equation}
where 
\begin{equation}\label{muenne}
\mu_m(t)(x,v) = \frac{1}{m} \sum_{k=1}^m \delta_{(y_k(t),w_k(t))}(x,v)\mbox{~~~~and~~~~}\mu_N(t)(x,v) = \frac{1}{N} \sum_{i=1}^N \delta_{(x_i(t),v_i(t))}(x,v)
\end{equation}
are the time dependent  atomic measures supported on the phase space trajectories $(y_k(t),w_k(t)) \in \mathbb R^{2 d}$, for $k=1,\dots m$ and $(x_i(t),v_i(t)) \in \mathbb R^{2 d}$, for $i=1,\dots N$, respectively, constrained by being the solution of the system
\begin{equation}
\begin{cases}
\dot y_k = w_k, & \\
\dot w_k =  H\star \mu_N(y_k,w_k) + H \star\mu_m(y_k,w_k) + u_k\, & k=1,\dots m, \quad t \in [0,T],\\
\dot x_i = v_i, & \\
\dot v_i =  H\star \mu_N(x_i,v_i) + H \star\mu_m(x_i,v_i)\, & i=1,\dots N, \quad t \in [0,T],
\end{cases}
\label{e-findim2}
\end{equation}
for a given datum $\zeta^0=(y^0,w^0,x^0,v^0) \in \mathbb R^{2d(m+N)}$ and control $u\in L^1([0,T],\mathcal{U})$.\\

Let us recall that the existence of  Carath{\'e}odory solutions of \eqref{e-findim2} for any given $u=(u_1,\dots,u_k) \in L^1([0,T],\U)$ is  ensured by Proposition \ref{exsol}.

\begin{theorem}\label{thm:4}
The finite horizon optimal control problem \eqref{mainmod3}-\eqref{e-findim2} with initial datum $\zeta^0=(y^0,w^0,x^0,v^0) \in \mathbb R^{4d}$ has solutions.
\end{theorem}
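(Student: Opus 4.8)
The plan is to use the direct method of the calculus of variations, exploiting the fact that the control enters the dynamics \eqref{e-findim2} affinely and the running cost convexly. Denote by $J(u)$ the functional in \eqref{mainmod3} and set $J^\ast:=\inf_{u}J(u)$; this infimum is finite and nonnegative, since $J\ge 0$ while for any fixed admissible $u$ the trajectory stays in a fixed compact set by \eqref{growth}, so $L$ is bounded along it and the penalty is bounded by $U$. Fix a minimizing sequence $(u_n)_{n}\subset L^1([0,T],\U)$ with $J(u_n)\to J^\ast$. Since $u_n(t)\in\U\subset B(0,U)$ for a.e.\ $t$, the sequence is bounded in $L^\infty([0,T],\R^{md})$ and in particular uniformly integrable; by the Dunford--Pettis theorem a subsequence (not relabelled) converges weakly in $L^1$ to some $u_\ast$. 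Because $\U$ is convex and closed, the admissible set $L^1([0,T],\U)$ is convex and strongly closed, hence weakly closed by Mazur's lemma, so $u_\ast\in L^1([0,T],\U)$ is admissible.

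Next I would show that weak convergence of the controls forces strong (uniform) convergence of the associated trajectories, reproducing for fixed $N$ the argument of Theorem \ref{p-infdimsol}. Let $\zeta_n=(y_n,w_n,x_n,v_n)$ be the unique Carath\'eodory solution of \eqref{e-findim2} with control $u_n$, which exists by Proposition \ref{exsol}. The bounds \eqref{growth} and \eqref{lipcont} are uniform in the control (the constants depend only on $C$, $U$, $T$, $\zeta^0$), so $(\zeta_n)_n$ is equibounded and equi-Lipschitz in $C^0([0,T],\R^{2d(m+N)})$; by Ascoli--Arzel\`a a further subsequence converges uniformly to some $\hat\zeta$. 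Passing to the limit in the integral formulation $\zeta_n(t)=\zeta^0+\int_0^t g(s,\zeta_n(s))\,ds$ is then routine: the terms not involving the control converge by uniform convergence of $\zeta_n$ together with the local Lipschitz continuity of $\xi\mapsto H\star(\mu_N+\mu_m)(\xi)$ on the fixed compact set, while the control term $\int_0^t u_{n,k}(s)\,ds$ converges to $\int_0^t u_{\ast,k}(s)\,ds$ by testing the weak $L^1$ limit against $\mathbf 1_{[0,t]}\in L^\infty$. Hence $\hat\zeta$ is a Carath\'eodory solution of \eqref{e-findim2} for the control $u_\ast$, and by the uniqueness in Proposition \ref{exsol} it coincides with the trajectory $\zeta_\ast$ generated by $u_\ast$; uniqueness also yields convergence of the full sequence.

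Finally I would establish lower semicontinuity of $J$ and conclude. The state part converges: since $\zeta_n\to\zeta_\ast$ uniformly we have $(y_n,w_n)\to(y_\ast,w_\ast)$ uniformly and, by \eqref{wasserest}, $\mathcal W_1(\mu_{N,n}(t),\mu_{N,\ast}(t))\to 0$ uniformly in $t$; continuity of $L$ on $(\mathcal X,\|\cdot\|_{\mathcal X})$ and boundedness of the integrand then give $\int_0^T L(y_n,w_n,\mu_{N,n})\,dt\to\int_0^T L(y_\ast,w_\ast,\mu_{N,\ast})\,dt$. The control part is handled by convexity: the functional $u\mapsto\int_0^T\frac1m\sum_{k=1}^m|u_k(t)|\,dt$ is convex and strongly continuous on $L^1$, hence weakly sequentially lower semicontinuous, so its value at $u_\ast$ does not exceed $\liminf_n$ of its values at $u_n$. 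Adding the two contributions yields $J(u_\ast)\le\liminf_n J(u_n)=J^\ast$, and since $u_\ast$ is admissible we get $J(u_\ast)=J^\ast$, i.e.\ $u_\ast$ is a minimizer. The step I expect to be most delicate is precisely the admissibility of the weak limit: strong convergence of the states comes essentially for free because the control is affine in the dynamics, but guaranteeing that $u_\ast$ still takes values in $\U$ requires $\U$ to be convex (so that $L^1([0,T],\U)$ is weakly closed); for a merely compact, nonconvex $\U$ the infimum need not be attained and one would be forced to pass to relaxed controls.
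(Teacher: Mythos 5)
Your proposal follows essentially the same route as the paper's proof: minimizing sequence, weak $L^1$ compactness of the controls, equiboundedness and equi-Lipschitz continuity of the trajectories uniformly in the control followed by Ascoli--Arzel\`a, identification of the limit trajectory as the solution for the limit control, and then splitting the cost into the $L$-part (which converges by continuity of $L$, \eqref{wasserest}, and uniform convergence of the states) and the control penalty (handled by weak lower semicontinuity of the norm). The paper presents exactly this argument, only as a sketch with a pointer to \cite[Theorem 5.2.1]{BressanPiccoli}, so your write-up is a correct and more detailed version of the same proof.

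The one place where you genuinely go beyond the paper is your closing caveat, and it is well taken. The paper assumes only that $\U$ is a nonempty compact subset of $B(0,U)$, and its proof asserts without comment that the weak $L^1$ limit $u^*$ again lies in $L^1([0,T],\U)$; as you observe, bounded in $L^1$ alone does not even give weak sequential compactness (one needs the uniform integrability you get from the $L^\infty$ bound via Dunford--Pettis), and $\U$-valuedness of the weak limit requires $\U$ to be convex and closed, since weak limits of $\U$-valued functions in general only take values in the closed convex hull of $\U$. For a nonconvex compact $\U$ (e.g.\ a two-point set) attainment can genuinely fail and one must pass to relaxed controls; the classical existence theorem cited by the paper likewise carries a convexity hypothesis on the augmented velocity set. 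So your version makes explicit a hypothesis that the paper's statement omits but its proof silently uses.
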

\begin{proof} For the sake of self-containedness and broad readability, we just sketch  briefly the proof of this statement, which follows from very classical results in optimal control, see, e.g., {\cite[Theorem 5.2.1]{BressanPiccoli}}.
Let $(u^n)_{n \in \mathbb N}$ be a  minimizing sequence {realizing at its limit the minimum of the cost functional in \eqref{mainmod3}}. As this sequence is necessarily bounded in $L^1([0,T],\U)$, it admits a subsequence, which we simply rename as $(u^n)_{n \in \mathbb N}$, weakly
converging to a $u^* \in L^1([0,T],\U)$. At the same time the corresponding solutions $\zeta^n(t)=(y^n(t),w^n(t),x^n(t),v^n(t))$ of \eqref{e-findim2} given the  control $u^n$ in $L^1([0,T],\U)$ are equi-bounded and equi-Lipschitz continuous in time, thanks to an argument identical to  the one given at the beginning of the proof of Proposition \ref{p-infdimsol}. We similarly conclude that $\zeta^n$ has a subsequence, again not relabeled, converging uniformly to a trajectory $\zeta^*$ which is actually the solution of \eqref{e-findim2} given the control $u^*$ in $L^1([0,T],\U)$. The uniform convergence of the trajectories and their compact support also allow us to conclude by the use of condition (L) that 
\begin{eqnarray*}
&&\lim_{n \to +\infty} \int_{0}^T   L(y^n(t),w^n(t),\mu_N^n(t)) dt = \int_{0}^T   L(y^*(t),w^*(t),\mu_N^*(t)) dt,
\end{eqnarray*}
and the weak convergence of $(u^n)_{n \in \mathbb N}$ to $u^* \in L^1([0,T],\U)$ implies the lower-semicontinuity of the norm
$$
\lim \inf_{n \to +\infty}  \frac{1}{m} \int_{0}^T \sum_{k=1}^m |u_k^n(t)|  dt \geq \frac{1}{m} \int_{0}^T \sum_{k=1}^m |u_k^*(t)|  dt.
$$
We conclude by these two limits that $u^*$ is an optimal control for  \eqref{mainmod3}-\eqref{e-findim2}.

\end{proof}

\section{The $\Gamma$-limit to the infinite-dimensional optimal control problem}

We shall now recall the concept of $\Gamma$-limit, which, together with the mean-field limit established by Theorem \ref{p-infdimsol} will allow us  to prove that solutions of the optimal control problems  \eqref{mainmod3}-\eqref{e-findim2} converges to  optimal controls for the system \eqref{e-infdim}.

\begin{definition} [\( \Gamma \)-convergence]
  \label{def:gamma-conv}
  \cite[Definition 4.1, Proposition 8.1]{93-Dal_Maso-intro-g-conv}
  Let \( X \) be a metrizable separable space and \( F_N \colon X \rightarrow (-\infty,\infty] \), \( N \in \mathbb{N} \) be a sequence of functionals. Then we say that \( F_N \) \emph{\( \Gamma \)-converges} to \( F \), written as \( F_N \xrightarrow{\Gamma} F \), for an \( F \colon X \rightarrow (-\infty,\infty] \), if
  \begin{enumerate}
  \item \emph{\( \liminf \)-condition:} For every \( u \in X \) and every sequence \( u_N \rightarrow u \),
    \begin{equation*}
      F(u) \leq \liminf_{N\rightarrow\infty} F_N(u_N);
    \end{equation*}
  \item \emph{\( \limsup \)-condition:} For every \( u \in X \), there exists a sequence \( u_N \rightarrow u \), called \emph{recovery sequence}, such that
    \begin{equation*}
      F(u) \geq \limsup_{N\rightarrow\infty} F_N(u_N).
    \end{equation*}
  \end{enumerate}

  Furthermore, we call the sequence \( (F_N)_N \) \emph{equi-coercive} if for every \( c \in \mathbb{R} \) there is a compact set \( K \subseteq X \) such that \( \left\{ u : F_N(u) \leq c \right\} \subseteq K \) for all \( N \in \mathbb{N} \). As a direct consequence,  {assuming \( u_N^* \in \arg \min F_N \neq \emptyset \)} for all $N \in \mathbb N$, there is a subsequence \( (u_{N_k}^*)_k \) and \( u^\ast \in X \) such that
  \begin{equation*}
    u_{N_k}^* \rightarrow u^\ast \in \arg \min F.
  \end{equation*}
\end{definition}

In the following we assume that $H$ is a function satisfying (H) so that \eqref{e-findim} and \eqref{e-infdim} are well-posed, for a given control $u$ and suitable initial conditions. In view of the definition of $\Gamma$-convergence, let us fix as our domain $X = L^1([0,T],\U)$ which, endowed with the weak $L^1$-topology, is actually a metrizable space.

Fix now an initial datum $(y^0,w^0,\mu^0) \in \mathcal X$, with $\mu^0$ compactly supported, $\supp(\mu^0) \subset B(0,R)$, $R>0$, and choose a sequence of equi-compactly supported atomic measures $\mu_N^0$, $\supp(\mu^0_N) \subset B(0,R)$, $\mu_N^0 = \frac{1}{N} \sum_{i=1}^N \delta_{(x_i^0,v_i^0)}$ such that $\mathcal W_1(\mu^0_N, \mu^0) \to 0$ for $N \to +\infty$.

We define the following functional on $X$
\begin{equation}
F(u) = \int_{0}^T  \left \{ L(y(t),w(t),\mu(t))   +  \frac{1}{m} \sum_{k=1}^m |u_k(t)|  \right \}dt,
\label{Ffun}
\end{equation}
where the triplet $(y,w,\mu)$ defines the unique solution of \eqref{e-infdim}  with initial datum $(y^0,w^0,\mu^0)$ and control $u$, i.e.,
\begin{equation}
\begin{cases}
\dot y_k = w_k, & \\
\dot w_k =  H\star(\mu+\mu_m)(y_k,w_k)+u_k, & k=1,\dots m, \quad t\in[0,T]\\
\partial_t\mu+v \cdot \nabla_x \mu = \nabla_v \cdot \left [ \left ( H \star (\mu +\mu_m) \right ) \mu \right ],
\end{cases}
\label{e-infdim2}
\end{equation}
{in the sense of Definition \ref{soldef}.} Similarly, we define the functionals on $X$ given by
\begin{equation}
F_N(u) = \int_{0}^T  \left \{  L(y_{N}(t),w_{N}(t),\mu_N(t))  +  \frac{1}{m} \sum_{k=1}^m |u_k(t)|  \right \}dt,
\label{FNfun}
\end{equation}
where $\mu_N(t) = \frac{1}{N} \sum_{i=1}^N  \delta_{(x_{i,N}(t),v_{i,N}(t))}$ is the time-dependent atomic measure supported on the trajectories defining the
Carath{\'e}odory solution of the system
\begin{equation}
\begin{cases}
\dot y_k = w_k, & \\
\dot w_k =  H\star \mu_N(y_k,w_k) + H \star\mu_m(y_k,w_k) + u_k\, & k=1,\dots m, \quad t \in [0,T],\\
\dot x_i = v_i, & \\
\dot v_i =  H\star \mu_N(x_i,v_i) + H \star\mu_m(x_i,v_i)\, & i=1,\dots N, \quad t \in [0,T],
\end{cases}
\label{e-findim3}
\end{equation}
with initial datum $(y^0,w^0,x^0_N,v^0_N)$ and control $u$.  

\begin{remark}Observe that the choice of the functionals $F_N$ depends on the choice of the sequence $\mu^0_N$ approximating $\mu^0$.
\end{remark}
The rest of this section is devoted to the proof of the $\Gamma$-convergence
of the sequence of functionals $(F_N)_{N \in \mathbb N}$ on $X$ to the target functional $F$. Let us mention that $\Gamma$-convergence in optimal control problems has been already considered, see for instance \cite{BuDm82}, but, to our knowledge, it has been only recently specified in connection to mean-field limits in \cite{MFOC}.
\begin{theorem}\label{thm:gamma}
Let $H$ and $L$ be maps satisfying conditions (H) and (L) respectively. Given an initial datum $(y^0,w^0,\mu^0) \in \mathcal X$ and an approximating sequence $\mu^0_N$,  with $\mu^0,\mu^0_N$ {equi-compactly supported, i.e., $\supp(\mu^0) \cup \supp(\mu^0_N) \subset B(0,R)$, $R>0$, for all $N \in \mathbb N$}, then the sequence of functionals $(F_N)_{N \in \mathbb N}$ on $X=L^1([0,T],\U)$ defined in \eqref{FNfun} $\Gamma$-converges to the functional $F$ defined in \eqref{Ffun}.
\end{theorem}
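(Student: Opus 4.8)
The plan is to establish the two conditions of $\Gamma$-convergence (Definition \ref{def:gamma-conv}) for the functionals $F_N$ defined in \eqref{FNfun} with respect to the weak $L^1$-topology on $X = L^1([0,T],\U)$. The crucial tool is Theorem \ref{p-infdimsol}: whenever $u_N \rightharpoonup u$ weakly in $L^1$, the corresponding solutions $(y_N,w_N,\mu_N)$ of the finite-dimensional system \eqref{e-findim3} converge in $C^0([0,T],\mathcal X)$ to the solution $(y,w,\mu)$ of the limit system \eqref{e-infdim2} with control $u$. I note at the outset that both $F_N$ and $F$ share the \emph{same} control penalty term $\frac{1}{m}\int_0^T \sum_{k=1}^m |u_k(t)|\,dt$, so the $\Gamma$-convergence reduces essentially to the continuity behavior of the running cost $\int_0^T L(y(t),w(t),\mu(t))\,dt$ along converging controls.

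First I would verify the $\liminf$-condition. Fix $u \in X$ and an arbitrary sequence $u_N \rightharpoonup u$. By Theorem \ref{p-infdimsol} the states $(y_N,w_N,\mu_N)$ converge uniformly in $\mathcal X$ to $(y,w,\mu)$, the solution associated with $u$. Since $L$ is continuous with respect to $\|\cdot\|_{\mathcal X}$ by hypothesis (L), and the trajectories are equi-compactly supported (again by Theorem \ref{p-infdimsol}, with a uniform support ball $B(0,R_T)$), the dominated convergence theorem yields
\begin{equation*}
\lim_{N\to\infty} \int_0^T L(y_N(t),w_N(t),\mu_N(t))\,dt = \int_0^T L(y(t),w(t),\mu(t))\,dt.
\end{equation*}
For the penalty term, weak $L^1$-convergence of $u_N$ to $u$ gives lower semicontinuity of the total-variation-type functional $\frac{1}{m}\int_0^T \sum_{k=1}^m |u_k(t)|\,dt$, exactly as exploited in the proof of Theorem \ref{thm:4}. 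Adding the two, $F(u) \leq \liminf_N F_N(u_N)$ follows.

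For the $\limsup$-condition I would exhibit a recovery sequence, and here the natural choice is the constant sequence $u_N := u$ for all $N$. With this choice $u_N \rightharpoonup u$ trivially, the state convergence of Theorem \ref{p-infdimsol} applies, and the running-cost term converges (not merely $\liminf$) by the same dominated-convergence argument as above, while the penalty terms coincide identically. Hence $\lim_N F_N(u) = F(u)$, which gives $F(u) \geq \limsup_N F_N(u)$ and completes this half. I expect the main technical point — rather than a deep obstacle — to be ensuring that the convergence of the running cost is genuine convergence and not just inequality; this rests entirely on the \emph{uniform} (in $N$) support bound and equi-Lipschitz estimates furnished by Propositions \ref{exsol} and \ref{p-infdimsol}, which render $L$ bounded along the whole family of trajectories and thus legitimize dominated convergence. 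A secondary point worth stating explicitly is metrizability of the weak $L^1$-topology on the bounded, uniformly integrable set $L^1([0,T],\U)$ (since $\U$ is compact), so that the sequential formulation of $\Gamma$-convergence in Definition \ref{def:gamma-conv} is the correct one; combined with equi-coercivity — which follows because $\{u : F_N(u)\le c\}$ lies in the weakly compact set $L^1([0,T],\U)$ independently of $N$ — the final clause of Definition \ref{def:gamma-conv} then delivers the desired convergence of finite-dimensional minimizers $u_N^*$ to a minimizer $u^*$ of $F$, which is the existence-and-approximation statement motivating the theorem.
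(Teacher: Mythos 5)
Your proposal is correct and follows essentially the same route as the paper's proof: invoking Theorem \ref{p-infdimsol} for full-sequence uniform convergence of the states (the point the paper isolates in Remark \ref{fullsequence}), using condition (L) together with the uniform support bounds to pass to the limit in the running cost, weak lower semicontinuity of the $L^1$-penalty for the $\liminf$ inequality, and the constant recovery sequence $u_N \equiv u$ for the $\limsup$ condition. Your closing remarks on metrizability and equi-coercivity go slightly beyond the theorem itself and correspond to what the paper defers to its subsequent corollary, but they are accurate.
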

\begin{proof}
Let us start by showing the $\Gamma-\liminf$ condition. Let us fix a weakly convergent sequence of controls $u_N \rightharpoonup u_*$ in $L^1([0,T],\U)$. As done in the proof of Theorem \ref{p-infdimsol} we can associate to each of these controls a sequence
of solutions $\zeta_N(t)=  (y_N(t),w_N(t),\mu_N(t)):= (y_N(t),w_N(t),x_N(t),v_N(t))$ of \eqref{e-findim3} uniformly convergent to a solution $\zeta_*(t) = (y_*(t),w_*(t),\mu_*(t))$ of \eqref{e-infdim2} in the sense of Definition \ref{soldef} with control $u_*$ and initial datum  $(y^0,w^0,\mu^0)$. In view of the fact that solutions $\zeta_N(t)$ and $\zeta_*(t)$ will have supports uniformly bounded with respect to $N$ and $t \in [0,T]$ and by the uniform convergence of trajectories $(y_N(t),w_N(t)) \rightrightarrows (y_*(t),w_*(t))$ as well as the uniform convergence $\mathcal W_1(\mu_N(t),\mu_*(t)) \to 0$ for $t \in [0,T]$, it follows from condition (L) that
\begin{eqnarray}
&&\lim_{ N \to +\infty} \int_{0}^T   L(y_{N}(t),w_{N}(t),\mu_{N}(t)) =\int_{0}^T  L (y_{*}(t),w_{*}(t),\mu_*(t)) dt. \label{lowsem1}
\end{eqnarray}
Notice that, thanks to Remark \ref{fullsequence}, here we are allowed to consider the convergence of the entire sequence $(\zeta_N)_{N \in \mathbb N}$ and we do not need to 
restrict to a subsequence (and this is a crucial issue in order to properly derive the $\Gamma-\liminf$ condition!).
By the assumed weak convergence of $(u_N)_{N \in \mathbb N}$ to $u^* \in L^1([0,T],\U)$ we obtain the lower-semicontinuity of the norm
\begin{equation}
\lim \inf_{n \to +\infty}  \frac{1}{m} \int_{0}^T \sum_{k=1}^m |u_{k,N}(t)|  dt \geq \frac{1}{m} \int_{0}^T \sum_{k=1}^m |u_{k,*}(t)|  dt. \label{lowsem2}
\end{equation}
By combining \eqref{lowsem1} and \eqref{lowsem2}, we immediately obtain the $\Gamma-\liminf$ condition
$$
\liminf_{N \to \infty} F_N(u_N) \geq F(u_*).
$$
We need now to address the $\Gamma-\limsup$ condition. Let us fix $u_*$ and we consider the trivial recovery sequence $u_N \equiv u_*$ for all $N \in \mathbb N$. Similarly as above for the argument of the $\Gamma-\liminf$ condition, we can associate to each of these controls a sequence
of solutions $\zeta_N(t)=  (y_N(t),w_N(t),\mu_N(t)):= (y_N(t),w_N(t),x_N(t),v_N(t))$ of \eqref{e-findim3} uniformly convergent to a solution $\zeta_*(t) = (y_*(t),w_*(t),\mu_*(t))$ of \eqref{e-infdim2} in the sense of Definition \ref{soldef} with control $u$ and initial datum  $(y^0,w^0,\mu^0)$ and we can similarly conclude the limit \eqref{lowsem1}. Additionally, being the sequence $(u_N)_{N \in \mathbb N}$ trivially a constant sequence   we have
\begin{equation}
\lim \inf_{n \to +\infty}  \frac{1}{m} \int_{0}^T \sum_{k=1}^m |u_{k,N}(t)|  dt = \frac{1}{m} \int_{0}^T \sum_{k=1}^m |u_{k,*}(t)|  dt. \label{lowsem3}
\end{equation}
Hence, combining \eqref{lowsem1} and \eqref{lowsem3} we can easily infer
$$
\limsup_{N \to \infty} F_N(u_N) = \lim_{N \to \infty} F_N(u_*) =  F(u_*).
$$
\end{proof}

\begin{corollary}
Let $H$ and $L$ be maps satisfying conditions (H) and (L) respectively. Given an initial datum $(y^0,w^0,\mu^0) \in \mathcal X$,  with $\mu^0$ compactly supported, $\supp(\mu^0) \subset B(0,R)$, $R>0$, the optimal control problem
\begin{equation}
\label{optcontoo}
\min_{u \in L^1([0,T],\U)} \int_{0}^T  \left \{ L(y(t),w(t),\mu(t))  +  \frac{1}{m} \sum_{k=1}^m |u_k(t)|  \right \}dt,
\end{equation}
has solutions, where the triplet $(y,w,\mu)$ defines the unique solution of \eqref{e-infdim} with initial datum $(y^0,w^0,\mu^0)$ and control $u$ of
\begin{equation}
\begin{cases}
\dot y_k = w_k, & \\
\dot w_k =  H\star(\mu+\mu_m)(y_k,w_k)+u_k, & k=1,\dots m, \quad t\in[0,T]\\
\partial_t\mu+v \cdot \nabla_x \mu = \nabla_v \cdot \left [ \left ( H \star (\mu +\mu_m) \right ) \mu \right ],
\end{cases}
\end{equation}
{in the sense of Definition \ref{soldef},} and
\begin{equation}
\mu_m(t) = \frac{1}{m} \sum_{k=1}^n \delta_{(y_k(t),w_k(t))}.
\end{equation} 
Moreover, solutions to \eqref{optcontoo} can be constructed as weak limits $u^*$ of sequences of optimal controls $u_{N}^*$ of the finite dimensional problems
\begin{equation}
\min_{u \in L^1([0,T],\U)} \int_{0}^T  \left \{ L(y_{N}(t),w_{N}(t),\mu_N(t))  +  \frac{1}{m} \sum_{k=1}^m |u_k(t)|  \right \}dt,
\label{FNfun2}
\end{equation}
where $\mu_N(t) = \frac{1}{N} \sum_{i=1}^N  \delta_{(x_{i,N}(t),v_{i,N}(t))}$ and $\mu_{m,N}(t) = \frac{1}{m} \sum_{k=1}^m  \delta_{(y_{k,N}(t),w_{k,N}(t))}$ are the time-dependent atomic measures supported on the trajectories defining the
solution of the system
\begin{equation}
\begin{cases}
\dot y_k = w_k, & \\
\dot w_k =  H\star \mu_N(y_k,w_k) + H \star\mu_{m,M}(y_k,w_k) + u_k\, & k=1,\dots m, \quad t \in [0,T],\\
\dot x_i = v_i, & \\
\dot v_i =  H\star \mu_N(x_i,v_i) + H \star\mu_{m,M}(x_i,v_i)\, & i=1,\dots N, \quad t \in [0,T],
\end{cases}
\label{e-findim4}
\end{equation}
with initial datum $(y^0,w^0,x^0_N,v^0_N)$, control $u$, and   $\mu_N^0 = \frac{1}{N} \sum_{i=1}^N \delta_{(x_i^0,v_i^0)}$ is such that
$\mathcal W_1(\mu^0_N, \mu^0) \to 0$ for $N \to +\infty$.
\end{corollary}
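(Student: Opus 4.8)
The plan is to recognize this corollary as a direct application of the fundamental theorem of $\Gamma$-convergence, combining the existence of finite-dimensional minimizers from Theorem \ref{thm:4} with the $\Gamma$-convergence established in Theorem \ref{thm:gamma}. First I would observe that the functionals appearing here coincide with those already analyzed: the functional to be minimized in \eqref{optcontoo} subject to \eqref{e-infdim2} is precisely $F$ from \eqref{Ffun}, while \eqref{FNfun2} subject to \eqref{e-findim4} is precisely $F_N$ from \eqref{FNfun}, for the fixed approximating sequence $\mu^0_N$ with $\mathcal W_1(\mu^0_N,\mu^0) \to 0$. Hence Theorem \ref{thm:gamma} already yields $F_N \xrightarrow{\Gamma} F$ on $X = L^1([0,T],\U)$ endowed with the weak $L^1$-topology, and Theorem \ref{thm:4} guarantees that $\arg\min F_N \neq \emptyset$ for every $N$, producing a sequence of optimal controls $u_N^*$.

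The one ingredient not yet recorded is the equi-coercivity of the family $(F_N)_N$, and this is the step I would treat with most care. Since every admissible control takes values in the compact set $\U \subset B(0,U)$, the entire domain $X$ is uniformly bounded in $L^\infty([0,T],\R^{dm})$, hence uniformly integrable; by the Dunford--Pettis theorem $X$ is therefore relatively weakly compact in $L^1$. Moreover $X$ is weakly closed --- the pointwise constraint $u(t) \in \U$ is preserved under weak $L^1$-limits, exactly as already exploited in the weak-limit arguments of Theorem \ref{thm:4} and Theorem \ref{p-infdimsol} --- so $X$ is itself weakly compact, and by the asserted metrizability of the weak $L^1$-topology on $X$ it is sequentially compact. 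Consequently every sublevel set $\{u : F_N(u) \leq c\}$ is trivially contained in the fixed compact set $K = X$, which is precisely the equi-coercivity requirement of Definition \ref{def:gamma-conv}.

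With equi-coercivity, $\Gamma$-convergence, and nonempty $\arg\min F_N$ in hand, the direct consequence recorded in Definition \ref{def:gamma-conv} applies verbatim: there exist a subsequence $(u_{N_k}^*)_k$ and a control $u^* \in X$ with $u_{N_k}^* \rightharpoonup u^*$ weakly in $L^1$ and $u^* \in \arg\min F$. In particular $F$ attains its minimum, which is exactly the assertion that the infinite-dimensional problem \eqref{optcontoo} admits a solution, and since $u^*$ arises as the weak limit of the finite-dimensional optimal controls, this simultaneously delivers the constructive ``moreover'' part of the statement.

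I would expect the main (indeed the only nontrivial) obstacle to be the weak closedness of $X$ underlying equi-coercivity: if $\U$ were merely compact and not convex, weak $L^1$-limits of $\U$-valued functions need not remain $\U$-valued. I would therefore lean on the fact that the ambient analysis --- in particular the weak-limit arguments in the proof of Theorem \ref{thm:4} --- already presupposes that weak limits of admissible controls stay in $L^1([0,T],\U)$, so that the required closedness is consistent with the standing assumptions on $\U$; under the natural convexity of $\U$ it follows immediately from Mazur's lemma, and the rest of the argument is then purely formal.
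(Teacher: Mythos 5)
Your proposal is correct and in substance follows the same route as the paper: both arguments rest on weak $L^1$-compactness of $X=L^1([0,T],\U)$, the existence of finite-dimensional minimizers from Theorem \ref{thm:4}, and the $\Gamma$-convergence of Theorem \ref{thm:gamma}. The only difference is packaging. The paper does not invoke the equi-coercivity clause of Definition \ref{def:gamma-conv} as a black box; instead it re-proves that consequence inline: it extracts a weakly convergent subsequence $u_{N}^*\rightharpoonup u^*$ (simply asserting that $X$ is weakly compact), associates the corresponding solutions via Theorem \ref{p-infdimsol}, and then chains the three inequalities
$F(u)\ge\limsup_N F_N(u_N)\ge\limsup_N F_N(u_N^*)\ge\liminf_N F_N(u_N^*)\ge F(u^*)$,
where $(u_N)$ is a recovery sequence for an arbitrary $u\in X$, the middle step uses optimality of $u_N^*$, and the last uses the $\liminf$-condition. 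Your version buys a cleaner appeal to the abstract theory at the price of having to verify equi-coercivity, which you do via Dunford--Pettis together with weak closedness of the constraint set; the paper's version avoids naming equi-coercivity but hinges on exactly the same compactness assertion. Your closing remark is a genuine observation rather than a defect of your proof alone: for non-convex $\U$ the set $L^1([0,T],\U)$ need not be weakly closed, and the paper's proof (as well as that of Theorem \ref{thm:4}) relies on the same implicit assumption that weak limits of admissible controls remain admissible; making the convexity (or an appropriate relaxation) of $\U$ explicit is a point on which your write-up is more careful than the original.
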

\begin{proof}
Notice that the optimal controls $u_N^*$ of the finite dimensional optimal control problems \eqref{FNfun2}-\eqref{e-findim4} belongs to $X=  L^1([0,T],\U)$, which is a compact set with respect to the weak topology of $L^1$.
Hence $(u_N^*)_{N \in \mathbb N}$ admits a  subsequence, which we do not relabel, weakly convergent to some $u^* \in L^1([0,T],\U)$.  Moreover, as done in the proof of Theorem \ref{p-infdimsol} we can associate to each of these controls a sequence
of solutions $\zeta_N(t)=  (y_N(t),w_N(t),\mu_N(t)):= (y_N(t),w_N(t),x_N(t),v_N(t))$ of \eqref{e-findim3} uniformly convergent to a solution $\zeta_*(t) = (y_*(t),w_*(t),\mu_*(t))$ of \eqref{e-infdim2} in the sense of Definition \ref{soldef} with control $u^*$. In order to conclude that $u^*$ is an optimal control for \eqref{e-findim3} we need to show that it is actually a minimizer of $F$. For that we use the fact that $F$ is the $\Gamma$-limit of the sequence $(F_N)_{N \in \mathbb N}$ as proved in Theorem \ref{thm:gamma}.
Let $u \in X$ be an arbitrary control and let $(u_N)_{N \in \mathbb N}$ be a recovery sequence given by the $\Gamma-\limsup$ condition, so that
\begin{eqnarray}
F(u) \geq \limsup_{N \to \infty} F_N(u_N).\label{firstin}
\end{eqnarray}
By using now the optimality of $(u_N^*)_{N \in \mathbb N}$
\begin{eqnarray}
 \limsup_{N \to \infty} F_N(u_N) \geq \limsup_{N \to \infty} F_N(u_N^*) \geq \liminf_{N \to \infty} F_N(u_N^*).\label{secondin}
\end{eqnarray}
Applying the  $\Gamma-\liminf$ condition yields
\begin{eqnarray}
  \liminf_{N \to \infty} F_N(u_N^*) \geq F(u^*).\label{thirdin}
\end{eqnarray}
By chaining the inequalities \eqref{firstin}-\eqref{secondin}-\eqref{thirdin} we eventually obtain that
$$
F(u) \geq F(u^*), \quad \mbox{ for all } u \in X,
$$
or that $u^*$ is an optimal control.
\end{proof}

\begin{remark} {Observe that the previous result does not state uniqueness of the optimal control for the infinite dimensional problem. Moreover, in general, we cannot ensure that such optimal controls are always limits of sequences of optimal controls of  \eqref{FNfun2}-\eqref{e-findim4}.}
\end{remark}

\section{Appendix}

For the reader's convenience we start by briefly recalling some well-known results about solutions to Carath{\'e}odory differential equations. We fix an interval $[0,T]$ on the real line, and let $n\ge 1$.
Given a domain $\Omega \subset \R^n$, a Carath{\'e}odory function $g\colon[0,T]\times \Omega \to \R^n$, and $0<\tau \le T$, a function $y\colon [0,\tau]\to \Omega$ is called a solution of the Carath{\'e}odory differential equation
\begin{equation}\label{cara}
\dot y(t)=g(t, y(t))
\end{equation}
on $[0,\tau]$ if and only if $y$ is absolutely continuous and \eqref{cara} is satisfied a.e.\ in $[0,\tau]$.
The following existence and uniqueness result holds.
\begin{theorem}\label{cara2}
Consider an interval $[0,T]$ on the real line, a domain $\Omega \subset \R^n$, $n\ge 1$, and a Carath{\'e}odory function $g\colon[0,T]\times \Omega \to \R^n$. Assume that there exists a constant $C>0$ such that
$$
|g(t,y)|\le C
$$
for a.e.\ $t \in [0,T]$ and every $y \in \Omega$. Then, given $y_0 \in \Omega$, there exists $0<\tau \le T$ and a solution $y(t)$ of \eqref{cara} on $[0,\tau]$ satisfying $y(0)=y_0$. 

If in addition there exists another constant $L>0$ such that
\begin{equation}\label{cara3}
|g(t,y_1)-g(t, y_2)|\le L|y_1-y_2|
\end{equation}
for a.e.\ $t \in [0,T]$ and every $y_1$, $y_2 \in \Omega$, the solution is uniquely determined on $[0,\tau]$ by the initial condition $y_0$.
\end{theorem}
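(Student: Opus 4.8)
The plan is to reduce the differential equation to its integral form and to handle existence and uniqueness separately, the latter relying on the additional Lipschitz hypothesis \eqref{cara3}. First I would observe that, since $g$ is Carath\'eodory and bounded by $C$, for any measurable $y\colon[0,\tau]\to\Omega$ the map $s\mapsto g(s,y(s))$ is measurable and bounded, hence integrable; consequently $y$ solves \eqref{cara} with $y(0)=y_0$ on $[0,\tau]$ if and only if it satisfies the fixed-point relation
$$
y(t)=y_0+\int_0^t g(s,y(s))\,ds, \qquad t\in[0,\tau].
$$
I would then fix $\tau\le T$ small enough that the closed ball $\overline{B}(y_0,C\tau)$ is contained in $\Omega$; the bound $|g|\le C$ guarantees that any approximate trajectory starting at $y_0$ moves at speed at most $C$ and therefore cannot leave $\Omega$ before time $\tau$.

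For the existence part I would construct approximate solutions by a delayed (Tonelli) scheme: for each $k\in\mathbb N$ set $y_k(t)=y_0$ on $[0,\tau/k]$ and
$$
y_k(t)=y_0+\int_0^{t-\tau/k} g(s,y_k(s))\,ds,\qquad t\in(\tau/k,\tau],
$$
which is well defined by induction over successive intervals of length $\tau/k$, since the right-hand side at time $t$ depends only on values of $y_k$ already determined on $[0,t-\tau/k]$. From $|g|\le C$ each $y_k$ takes values in $\overline{B}(y_0,C\tau)\subset\Omega$ and is Lipschitz in time with constant $C$, so the family $(y_k)_k$ is equibounded and equicontinuous. Ascoli--Arzel\`a then yields a subsequence converging uniformly to some $y\in C^0([0,\tau],\Omega)$.

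The crux of the argument is the passage to the limit in the integral term, and this is exactly where the Carath\'eodory structure is essential. Since $y_k\to y$ uniformly and $g(s,\cdot)$ is continuous for a.e.\ $s$, one has $g(s,y_k(s))\to g(s,y(s))$ for a.e.\ $s\in[0,\tau]$; as all integrands are dominated by the constant $C$, the dominated convergence theorem lets me pass to the limit in $\int_0^{t-\tau/k}g(s,y_k(s))\,ds$, the vanishing of the shift $t-\tau/k\to t$ being controlled by $|g|\le C$. Hence the limit $y$ satisfies the fixed-point relation and is a Carath\'eodory solution. I expect this limit passage to be the main technical obstacle, together with the preliminary measurability of $s\mapsto g(s,y(s))$, which I would settle by approximating $y$ uniformly by piecewise-constant functions and invoking continuity of $g$ in the second variable.

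Finally, for uniqueness under \eqref{cara3}, I would take two solutions $y_1,y_2$ with $y_1(0)=y_2(0)=y_0$, subtract their integral representations, and estimate
$$
|y_1(t)-y_2(t)|\le\int_0^t |g(s,y_1(s))-g(s,y_2(s))|\,ds\le L\int_0^t|y_1(s)-y_2(s)|\,ds.
$$
Gronwall's inequality then forces $|y_1(t)-y_2(t)|\equiv 0$ on $[0,\tau]$, giving uniqueness. (Alternatively, under the Lipschitz bound the fixed-point map is a contraction on $C^0([0,\tau'],\overline{B}(y_0,C\tau'))$ for $\tau'$ small enough, yielding existence and uniqueness simultaneously by Banach's theorem; I would nonetheless keep the two hypotheses separate in order to match the two-part structure of the statement.)
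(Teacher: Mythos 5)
Your proposal is correct, but it cannot be compared to a proof ``in the paper'' in the usual sense: the paper disposes of Theorem~\ref{cara2} by citation alone, referring to \cite[Chapter 1, Theorems 1 and 2]{Fil}, since this is classical background material for the Appendix. What you have written out is, in essence, the standard argument that underlies the cited result: the reduction to the integral fixed-point equation (with the measurability of $s\mapsto g(s,y(s))$ settled by piecewise-constant approximation, exactly as it should be), existence via the delayed Tonelli scheme plus Ascoli--Arzel\`a, the passage to the limit by dominated convergence using the bound $|g|\le C$ as the majorant, and uniqueness via Gronwall under \eqref{cara3}. All the genuinely delicate points are handled: the choice of $\tau$ so that $\overline{B}(y_0,C\tau)\subset\Omega$ (legitimate since $\Omega$ is open), the well-posedness of the delayed scheme by induction on intervals of length $\tau/k$, the a.e.\ pointwise convergence $g(s,y_k(s))\to g(s,y(s))$ from continuity of $g(s,\cdot)$ for a.e.\ $s$, and the vanishing of the shifted tail $\int_{t-\tau/k}^{t}g(s,y_k(s))\,ds$ bounded by $C\tau/k$. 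The only caveat worth recording concerns your parenthetical alternative: to run the Banach fixed-point argument you must also require $C\tau'\le r$ (so that the invariant ball stays inside $\Omega$) in addition to $L\tau'<1$, and then iterate or use a weighted norm to cover all of $[0,\tau]$; your main route avoids this entirely and, unlike the contraction argument, gives existence under boundedness alone, which is precisely why the theorem can state the two conclusions under separate hypotheses. In short: your proof is sound and self-contained, whereas the paper simply outsources it; including an argument like yours would make the Appendix self-contained at the cost of about a page.
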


\begin{proof}
See, for instance, \cite[Chapter 1, Theorems 1 and 2]{Fil}.
\end{proof}

Also the global existence theorem and a Gronwall estimate on the solutions can be easily generalized to this setting.

\begin{theorem}\label{cara-global}
Consider an interval $[0,T]$ on the real line and a Carath{\'e}odory function $g\colon[0,T]\times \R^n \to \R^n$. Assume that there exists a constant $C>0$ such that
\begin{equation}\label{ttz}
|g(t,y)|\le C(1+|y|)
\end{equation}
for a.e.\ $t \in [0,T]$ and every $y \in \R^n$. Then, given $y_0 \in \R^n$, there exists a solution $y(t)$ of \eqref{cara} defined on the whole interval $[0,T]$ which satisfies $y(0)=y_0$. Any solution satisfies
\begin{equation}\label{gron}
|y(t)|\le \Big(|y_0|+ C t\Big) \,e^{C t}
\end{equation}
for every $t \in [0,T]$.

If in addition, for every relatively compact open subset of $\R^n$, \eqref{cara3} holds, the solution is uniquely determined on $[0,T]$ by the initial condition $y_0$.
\end{theorem}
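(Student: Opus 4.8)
The plan is to prove the statement in three stages: first establish the a priori Gronwall bound \eqref{gron} for every solution on every subinterval, then deduce global existence by a truncation-and-continuation argument built on the local theory of Theorem \ref{cara2}, and finally obtain uniqueness from the local Lipschitz hypothesis together with Gronwall's inequality.

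For the a priori estimate, I would start from the integral formulation of \eqref{cara}, namely $y(t) = y_0 + \int_0^t g(s,y(s))\,ds$, which is legitimate since solutions are absolutely continuous. Using the linear growth \eqref{ttz} yields $|y(t)| \le |y_0| + Ct + C\int_0^t |y(s)|\,ds$. Since the forcing term $t \mapsto |y_0| + Ct$ is nondecreasing, the integral form of Gronwall's inequality gives precisely $|y(t)| \le (|y_0| + Ct)e^{Ct}$, establishing \eqref{gron} for any solution wherever it is defined.

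To produce a global solution I would set $M := (|y_0| + CT)e^{CT}$ and introduce the truncated field $g_M(t,y) := g(t,\pi_M(y))$, where $\pi_M$ is the radial retraction onto the closed ball $\bar B(0,M)$. Then $g_M$ is again a Carath\'eodory function, and because $|\pi_M(y)| \le |y|$ it satisfies the same growth bound $|g_M(t,y)| \le C(1+|y|)$; in particular $g_M$ is uniformly bounded by $C(1+M)$. Theorem \ref{cara2} then provides a local solution, which I would continue to a maximal interval $[0,\tau^*)$: since $|\dot y| \le C(1+M)$ a.e., the solution is Lipschitz and hence admits a limit as $t \to \tau^*$, so if $\tau^* < T$ one may restart the local existence theorem from that endpoint and extend, contradicting maximality. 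Thus a solution of $\dot y = g_M(t,y)$ exists on all of $[0,T]$. The observation that closes the argument is that this solution also obeys the bound from the first stage (which used only the linear growth, shared by $g_M$), so $|y(t)| \le M$ for all $t$, the truncation is never active, and $y$ is in fact a genuine solution of \eqref{cara} satisfying \eqref{gron}.

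For uniqueness under the additional local Lipschitz hypothesis, suppose $y_1, y_2$ solve \eqref{cara} on $[0,T]$ with $y_1(0) = y_2(0) = y_0$. By the first stage both remain in $B(0,M)$, and on the relatively compact set $B(0,M+1)$ the estimate \eqref{cara3} holds with some constant $L$. Subtracting the integral formulations gives $|y_1(t) - y_2(t)| \le L\int_0^t |y_1(s) - y_2(s)|\,ds$, whence Gronwall's inequality with vanishing forcing forces $y_1 \equiv y_2$. I expect the main obstacle to be the continuation step: one must argue carefully that the maximal existence time, if strictly less than $T$, is actually attained as a limit (using the uniform bound on the velocity) and that the local existence theorem can be reapplied at that time to contradict maximality. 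The truncation device is exactly what makes the velocity bound uniform and keeps this argument clean, and the a posteriori verification that the truncation is inactive is what guarantees we have recovered a solution of the original, untruncated equation.
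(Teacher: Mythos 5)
Your proposal is correct, and its skeleton coincides with the paper's proof: both first derive the a priori estimate \eqref{gron} from the integral form of the equation, the linear growth \eqref{ttz}, and Gronwall's inequality, both obtain local existence from Theorem \ref{cara2}, and both deduce uniqueness from \eqref{cara3} on a bounded set that the a priori bound confines all solutions to. The difference lies in the globalization device. The paper confines the dynamics to a fixed ball $\Omega$ of radius strictly larger than $C_0=(|y_0|+CT)e^{CT}$, on which $g$ is bounded, and then cites Filippov's continuation theorem: a solution extends until its graph reaches the boundary of $[0,T]\times\Omega$, which \eqref{gron} forbids before time $T$. You instead truncate the field by the radial retraction $\pi_M$, making $g_M$ globally bounded, carry out the continuation by hand (the uniform velocity bound makes the solution Lipschitz in time, hence it has a limit at the maximal time, from which one restarts via Theorem \ref{cara2}), and then check a posteriori that the truncation never activates, since $|\pi_M(y)|\le|y|$ gives $|g_M(t,y)|\le C(1+|y|)$ and therefore \eqref{gron} holds for the truncated flow as well. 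The two mechanisms are interchangeable: your inactive-truncation check plays exactly the role of the paper's ``graph cannot reach the boundary'' observation. What your route buys is self-containedness --- no appeal to an external continuation theorem --- at the price of the truncation bookkeeping; the paper's route is shorter because it delegates the continuation step to the cited reference.
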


\begin{proof}
Let $C_0:= (|y_0|+ C T) \,e^{C T}$. Take a ball $\Omega \subset \R^p$  centered at $0$ with radius strictly greater than $C_0$. Existence of a local solution defined on an interval $[0,\tau]$ and taking values in $\Omega$ follows now easily from \eqref{ttz} and Theorem \ref{cara2}. Using \eqref{ttz}, any solution of \eqref{cara} with initial datum $y_0$ satisfies
$$
|y(t)|\le |y_0|+ C t+\int_0^t C |y(s)|\,ds
$$
for every $t \in [0,\tau]$, therefore \eqref{gron} follows from Gronwall's Lemma. In particular the graph of a solution $y(t)$ cannot reach the boundary of $[0,T]\times \Omega$ unless $\tau=T$, therefore existence of a global solution follows for instance from \cite[Chapter 1, Theorem 4]{Fil}. If \eqref{cara3} holds, uniqueness of the global solution follows from Theorem \ref{cara2}.
\end{proof}

The usual results on continuous dependence on the data hold also in this setting: in particular, we will use this Lemma, following from \eqref{gron} and the Gronwall inequality.

\begin{lemma}\label{lem:gronvalla}
Let $g_1$ and $g_2\colon[0,T]\times \R^n \to \R^n$ be Carath{\'e}odory functions both satisfying \eqref{ttz} for a  constant $C>0$. Let $r>0$ and define 
$$
\rho_{r, m, T}:=\Big(r+  C T\Big) \,e^{C T}\,.
$$ 
Assume in addition that there exists a constant $L=L(\rho_{r, m, T})>0$
$$
|g_1(t, y_1)-g_1(t, y_2)|\le L|y_1-y_2|
$$
for every $t \in [0, T]$ and every $y_1$, $y_2$ such that $|y_i|\le \rho_{r, m, T}$, $i=1,2$. (Notice that here we refer exclusively to $g_1$ and that the
constant $L$ actually depends on $\rho_{r, m, T}$.)
Set
$$
q(t):=\|g_1(t, \cdot)-g_2(t, \cdot)\|_{L^\infty(B(0, \rho_{r, m, T}))}\,.
$$
Then, if $\dot y_1(t)=g(t, y_1(t))$, $\dot y_2(t)=g_2(t, y_2(t))$, $|y_1(0)|\le r$ and $|y_2(0)|\le r$, one has
\begin{equation}\label{gronvalla}
|y_1(t)-y_2(t)|\le e^{L t}|y_1(0)-y_2(0)|+\int_0^t e^{\int_s^t L t}q(s)\,ds
\end{equation}
for every $t \in [0, T]$.
\end{lemma}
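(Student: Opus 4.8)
The plan is to reduce the whole statement to a single scalar integral (Gronwall) inequality for the quantity $\delta(t):=|y_1(t)-y_2(t)|$. The only genuinely delicate point is to guarantee \emph{a priori} that both trajectories stay inside the ball $B(0,\rho_{r,m,T})$, since it is only on that ball that the Lipschitz bound for $g_1$ and the sup-norm $q$ are available; once this confinement is secured, the rest is a routine splitting-and-Gronwall argument.

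First I would invoke Theorem \ref{cara-global}. Since both $g_1$ and $g_2$ satisfy the linear growth condition \eqref{ttz} with the same constant $C$, each solution exists on all of $[0,T]$ and obeys the Gronwall bound \eqref{gron}. Using $|y_i(0)|\le r$ and the monotonicity of $a\mapsto(a+Ct)e^{Ct}$ and of $t\mapsto(r+Ct)e^{Ct}$, this gives
\[
|y_i(t)|\le (|y_i(0)|+Ct)\,e^{Ct}\le (r+CT)\,e^{CT}=\rho_{r,m,T},\qquad t\in[0,T],\ i=1,2.
\]
This is precisely the confinement that makes the hypotheses of the lemma usable.

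Next I would write both equations in integral (Carath\'eodory) form, subtract them, and estimate
\[
\delta(t)\le \delta(0)+\int_0^t |g_1(s,y_1(s))-g_2(s,y_2(s))|\,ds .
\]
The key algebraic step is the \emph{asymmetric} splitting, inserting the intermediate term $g_1(s,y_2(s))$:
\[
|g_1(s,y_1(s))-g_2(s,y_2(s))|\le |g_1(s,y_1(s))-g_1(s,y_2(s))|+|g_1(s,y_2(s))-g_2(s,y_2(s))|.
\]
By the confinement of the first step both $y_1(s)$ and $y_2(s)$ lie in $B(0,\rho_{r,m,T})$, so the first term is bounded by $L\,\delta(s)$ via the Lipschitz hypothesis on $g_1$, while the second term is bounded by $q(s)=\|g_1(s,\cdot)-g_2(s,\cdot)\|_{L^\infty(B(0,\rho_{r,m,T}))}$ because $y_2(s)$ is in the ball. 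This is exactly where the assumption that only $g_1$ — and not $g_2$ — need be Lipschitz enters.

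Collecting these bounds yields the integral inequality
\[
\delta(t)\le \delta(0)+\int_0^t L\,\delta(s)\,ds+\int_0^t q(s)\,ds,
\]
to which I would apply the integral form of Gronwall's inequality with forcing term $q(s)$, obtaining precisely the estimate \eqref{gronvalla}. None of the individual steps is hard; the main obstacle, if overlooked, is the a priori $L^\infty$ confinement of both trajectories, since without it neither the Lipschitz estimate for $g_1$ nor the definition of $q$ on the fixed ball $B(0,\rho_{r,m,T})$ would be legitimate.
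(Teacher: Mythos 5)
Your proof is correct and takes exactly the route the paper intends: the paper gives no written-out proof of this lemma, stating only that it ``follows from \eqref{gron} and the Gronwall inequality,'' and your argument supplies precisely those two ingredients --- the a priori confinement of both trajectories in $B(0,\rho_{r,m,T})$ via the bound \eqref{gron}, and then the asymmetric splitting through $g_1(s,y_2(s))$ followed by the integral form of Gronwall's inequality. You also correctly identify the only delicate point (without the confinement neither the Lipschitz bound for $g_1$ nor the definition of $q$ would apply) and correctly read the paper's typographical slips, interpreting $e^{\int_s^t L\,t}$ in \eqref{gronvalla} as $e^{L(t-s)}$.
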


We mention again that $\PP(\R^n)$ denotes the space of probability measures on $\R^n$ with finite first moment. This is a metric space when endowed with the Wasserstein distance $\WW$. We recall below several useful results from \cite{CanCarRos10,MFOC} concerning Lipschitz continuity estimates for transport flows induced by the dynamics \eqref{e-findim}, which may be found in slightly different form and generality in several other papers, e.g., \cite{ambrosio,gw2,pedestrian}. The following lemma
is recalled from \cite[Lemma 6.4]{MFOC}.

\begin{lemma}\label{stimesceme}
Let $H\colon \R^n \to \R^{p}$, $n\ge p \ge 1$ be a locally Lipschitz function such that
\begin{equation}\label{hypo}
|H(y)|\le C(1+|y|), \quad \mbox{ for all } y \in \mathbb R^n, 
\end{equation}
and $\mu\colon[0,T]\to \PP(\R^n)$ be a continuous map with respect to $\WW$. Then there exists a constant $C'$ such that
\begin{equation}\label{trzu}
|H\star \mu(t) (y)|\le C'(1+|y|),
\end{equation}
for every $t \in [0, T]$ and every $y \in \R^n$. Furthermore, if
\begin{equation}\label{hypo2}
{\rm supp }\,\mu(t)\subset B(0,R),
\end{equation}
for every $t \in [0, T]$, then for every compact subset $K$ of $\R^n$ there exists a constant $L_{R,K}$ such that
\begin{equation}\label{lipt}
|H\star \mu(t) (y_1)-H\star \mu(t) (y_2)|\le L_{R,K}|y_1-y_2|,
\end{equation}
for every $t \in [0, T]$ and every $y_1$, $y_2 \in K$.
\end{lemma}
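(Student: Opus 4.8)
The plan is to work directly from the convolution representation
$$H\star\mu(t)(y)=\int_{\R^n}H(y-z)\,d\mu(t)(z),$$
which is consistent with the atomic case treated in Lemma~\ref{lem:fd}, and to establish the growth bound \eqref{trzu} and the local Lipschitz bound \eqref{lipt} separately. For the growth bound I would pass the modulus inside the integral and invoke the sublinear growth hypothesis \eqref{hypo}, obtaining
$$|H\star\mu(t)(y)|\le\int_{\R^n}|H(y-z)|\,d\mu(t)(z)\le C\Big(1+|y|+\int_{\R^n}|z|\,d\mu(t)(z)\Big),$$
where I used that $\mu(t)$ is a probability measure together with $|y-z|\le|y|+|z|$.

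The crux of the first part is then to bound the first moment $m_1(t):=\int_{\R^n}|z|\,d\mu(t)(z)$ uniformly in $t\in[0,T]$. The key observation is that $m_1(t)=\WW(\mu(t),\delta_0)$: the only transference plan between $\mu(t)$ and $\delta_0$ is the product $\mu(t)\otimes\delta_0$, so the representation \eqref{wasserstein} evaluates to precisely $\int_{\R^n}|z|\,d\mu(t)(z)$. Since $\WW(\cdot,\delta_0)$ is $1$-Lipschitz with respect to $\WW$ (by the triangle inequality) and $t\mapsto\mu(t)$ is by assumption continuous into $(\PP(\R^n),\WW)$, the map $t\mapsto m_1(t)$ is continuous on the compact interval $[0,T]$, hence bounded by some $m_T<\infty$. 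Setting $C':=C(1+m_T)$ then yields \eqref{trzu}, because $C(1+|y|+m_T)\le C(1+m_T)(1+|y|)$.

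For the Lipschitz bound I would instead exploit the support hypothesis \eqref{hypo2}. Fixing a compact set $K\subset\R^n$, I define the compact set $K_R:=\overline{\{y-z:\,y\in K,\ z\in B(0,R)\}}$, on which the locally Lipschitz function $H$ is globally Lipschitz with some constant $L_{R,K}$. For $y_1,y_2\in K$ and any $z\in\supp\mu(t)\subset B(0,R)$ both $y_1-z$ and $y_2-z$ lie in $K_R$, so
$$|H\star\mu(t)(y_1)-H\star\mu(t)(y_2)|\le\int_{\R^n}|H(y_1-z)-H(y_2-z)|\,d\mu(t)(z)\le L_{R,K}|y_1-y_2|,$$
using once more that $\mu(t)$ has unit mass and that the bound $L_{R,K}$ is independent of $z$ and of $t$.

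The only step requiring genuine care is the uniform-in-$t$ moment bound in the first part; once one recognizes $m_1(t)=\WW(\mu(t),\delta_0)$, it reduces to the continuity of a metric functional on a compact interval. The Lipschitz part is then essentially routine, since the compact support confines all arguments of $H$ to the single compact set $K_R$, independent of $t$, where local Lipschitz continuity can be upgraded to a uniform Lipschitz estimate.
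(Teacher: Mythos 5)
Your proof is correct. Note that the paper itself gives no proof of this lemma --- it is recalled from \cite[Lemma 6.4]{MFOC} --- and your argument is essentially the standard one given in that reference: the growth estimate \eqref{trzu} reduces to a uniform-in-$t$ bound on the first moment $\int_{\R^n}|z|\,d\mu(t)(z)$, which you obtain correctly by identifying it with $\WW(\mu(t),\delta_0)$ and using continuity of $t\mapsto\mu(t)$ on the compact interval $[0,T]$, while the Lipschitz estimate \eqref{lipt} follows, exactly as you argue, by upgrading the local Lipschitz continuity of $H$ to a uniform Lipschitz constant on the compact set $\overline{\{y-z:\,y\in K,\ z\in B(0,R)\}}$, which contains all arguments $y-z$ thanks to the support assumption \eqref{hypo2}.
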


Let us consider $\mu:[0,T] \to \PP(\R^{2d})$ a continuous map with respect to $\WW$ such that ${\rm supp }\,\mu(t)\subset B(0,R)$ for all $t \in [0,T]$, and $\mu_m(t)(y,w) = \frac{1}{m} \sum_{k=1}^m \delta_{(y_k(t),w_k(t))}$ a time dependent atomic measure supported on the absolutely continuous trajectories $t \mapsto (y_k(t),w_k(t))$, $k =1,\dots,m$.
We now consider the system of ODE's on $\R^{2d}$
\begin{equation}\label{system}
\begin{cases}
\dot X(t)=V(t)\\
\dot V(t)= H\star \mu(t) (X(t), V(t))+ H\star \mu_m(t) (X(t), V(t))
\end{cases}
\end{equation}
on an interval $[0, T]$. Here $X, V$ are both mappings from $[0, T]$ to $\R^d$ and $H\colon \R^{2d} \to \R^{d}$ is a locally Lipschitz function satisfying $|H(\xi)| \leq C(1 + |\xi|)$ for all $\xi \in \mathbb R^{2d}$ and for a constant $C>0$. It follows then from these assumptions and  Lemma \ref{stimesceme} that all the hypothesis of Theorem \ref{cara-global} are satisfied. Therefore, however given $P_0:=(X_0, V_0)$ in $\R^{2d}$ there exists a unique solution $P(t):=(X(t), V(t))$ to \eqref{system} with initial datum $P_0$ defined on the whole interval $[0, T]$. We can therefore consider the family of flow maps ${\mathcal T}^{\mu,\mu_m}_t \colon \R^{2d} \to \R^{2d}$ indexed by $t\in [0,T]$ and defined by
\begin{equation}\label{definitflow}
{\mathcal T}^{\mu,\mu_m}_t(P_0):=P(t)
\end{equation}
where $P(t)$ is the value of the unique solution to \eqref{system} starting from $P_0$ at time $t=0$. The notation aims also at stressing the dependence of these flow maps on the given mappings $\mu(t),\mu_m(t)$. 
We can easily recover, as consequence of \eqref{gronvalla}, similar estimates as in \cite[Lemmas 3.7 and 3.8]{CanCarRos10}: we report the statement and a sketch of the proof of this result to allow the reader to keep track of the dependence of these constants on the data of the problem.

\begin{lemma}
Let $H\colon \R^{2d} \to \R^{d}$ be a locally Lipschitz function satisfying 
$$
|H(\xi)| \leq C(1 + |\xi|), \quad \mbox{for all } \xi \in \mathbb R^{2d},
$$
for a constant $C>0$, and let $\mu^1\colon[0,T]\to \PP(\R^{2d})$ and $\mu^2\colon[0,T]\to \PP(\R^{2d})$ be continuous maps with respect to $\WW$ both satisfying 
\begin{equation}\label{hypo2+}
{\rm supp }\,\mu(t)\subset B(0,R)\quad\hbox{and}\quad {\rm supp }\,\nu(t)\subset B(0,R)
\end{equation}
for every $t \in [0, T]$, and $\mu_m^1$, $\mu_m^2$ two time-dependent atomic measures supported on the respective absolutely continuous trajectories   $t \mapsto (y_k^i(t),w_k^i(t))$, $i=1,2$ and $k =1,\dots,m$.
Consider  the flow maps ${\mathcal T}^{\mu^i,\mu_m^i}_t$, $i=1,2$,  associated to the systems
\begin{equation}\label{system+}
\begin{cases}
\dot X(t)=V(t)\\
\dot V(t)= H\star \mu^1(t) (X(t), V(t))+  H\star \mu_m^i(t) (X(t), V(t))
\end{cases}
\end{equation}
for $i=1,2$ respectively, on $[0, T]$. Fix $r>0$: then there exist a constant $\rho$ and a constant $L>0$, both depending only on $r$, $C$, $R$, and $T$ such that
\begin{equation}\label{contflow}
|{\mathcal T}^{\mu^1,\mu^1_m}_t(P_1)-{\mathcal T}^{\mu^2,\mu^2_m}_t(P_2)|\le e^{L t}|P_1-P_2|+ \int_0^t e^{L(s-t)}\|(H\star \mu^1(s)-H\star \mu^2(s)) + (H\star \mu^1_m(s)-H\star \mu^2_m(s))\|_{L^\infty(B(0,\rho))}\,ds
\end{equation}
whenever $|P_1|\le r$ and $|P_2|\le r$, for every $t \in [0, T]$.
\end{lemma}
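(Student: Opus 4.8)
The plan is to realize \eqref{contflow} as a direct instance of the continuous-dependence estimate \eqref{gronvalla} of Lemma \ref{lem:gronvalla}, applied to the two right-hand sides of \eqref{system+}. Accordingly, for $i=1,2$ I set $P=(X,V)\in\R^{2d}$ and introduce the Carath\'eodory fields
$$
g_i(t,P):=\bigl(V,\;H\star\mu^i(t)(P)+H\star\mu_m^i(t)(P)\bigr),
$$
so that $t\mapsto{\mathcal T}^{\mu^i,\mu_m^i}_t(P_i)$ is precisely the unique solution of $\dot y=g_i(t,y)$ with $y(0)=P_i$.

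First I would check that both fields fall under the hypotheses of Lemma \ref{lem:gronvalla}. The linear growth \eqref{ttz}, uniformly in $t$ and $i$, follows from the bound \eqref{trzu} of Lemma \ref{stimesceme} applied to $H\star\mu^i$ (using that $\mu^i$ is $\WW$-continuous) and to $H\star\mu_m^i$ (using that the leader trajectories $t\mapsto(y_k^i(t),w_k^i(t))$ are bounded on $[0,T]$, hence $\mu_m^i(t)$ is equi-compactly supported), together with the trivial estimate $|V|\le|P|$ for the first component. This yields a growth constant $C'$ depending only on $C$, $R$, $T$. Fixing $r>0$ and setting $\rho:=\rho_{r,m,T}=(r+C'T)e^{C'T}$ as in Lemma \ref{lem:gronvalla}, the growth bound \eqref{gron} of Theorem \ref{cara-global} guarantees that both trajectories issuing from $|P_1|\le r$ and $|P_2|\le r$ remain inside $B(0,\rho)$ on $[0,T]$; this is what makes $\rho$ depend only on $r$, $C$, $R$, $T$.

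Next I would supply the one-sided Lipschitz hypothesis, which Lemma \ref{lem:gronvalla} demands only for $g_1$. On the ball $B(0,\rho)$ the field $g_1$ is Lipschitz with a constant $L$ depending only on $\rho$ (hence on $r$, $C$, $R$, $T$): the first component is the identity in $V$, while the second is controlled by the Lipschitz estimate \eqref{lipt} of Lemma \ref{stimesceme} applied to $H\star\mu^1$ and $H\star\mu_m^1$ on the compact set $B(0,\rho)$. It then remains to identify the defect $q(t)=\|g_1(t,\cdot)-g_2(t,\cdot)\|_{L^\infty(B(0,\rho))}$. Since the first components of $g_1$ and $g_2$ coincide, the difference lives entirely in the force term, so
$$
q(t)=\bigl\|(H\star\mu^1(t)-H\star\mu^2(t))+(H\star\mu_m^1(t)-H\star\mu_m^2(t))\bigr\|_{L^\infty(B(0,\rho))},
$$
which is exactly the integrand appearing in \eqref{contflow}. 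Feeding $y_1(t)={\mathcal T}^{\mu^1,\mu_m^1}_t(P_1)$, $y_2(t)={\mathcal T}^{\mu^2,\mu_m^2}_t(P_2)$, and the initial data $y_1(0)=P_1$, $y_2(0)=P_2$ into \eqref{gronvalla} then reproduces \eqref{contflow}.

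I expect the only genuinely delicate point to be the uniformity of the constants $\rho$ and $L$: one must ensure that the supports entering the estimates are controlled independently of the particular pair $(\mu^i,\mu_m^i)$ — the followers' measures through the common radius $R$ in \eqref{hypo2+}, and the leaders' atomic measures through an a priori bound on their trajectories on $[0,T]$ — so that both constants depend only on $r$, $C$, $R$, and $T$ as claimed. Everything else is a bookkeeping application of the Gronwall lemma, whose asymmetry (Lipschitz continuity being invoked for $g_1$ alone) is precisely matched by the structure of \eqref{contflow}.
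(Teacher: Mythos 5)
Your proposal is correct and follows essentially the same route as the paper's own proof: both cast the two flows as Carath\'eodory solutions of $\dot y = g_i(t,y)$, obtain the uniform linear growth constant from Lemma \ref{stimesceme} so that \eqref{gron} confines both trajectories to a ball $B(0,\rho)$ with $\rho$ depending only on $r$, $C$, $R$, $T$, identify $q(t)=\|g_1(t,\cdot)-g_2(t,\cdot)\|_{L^\infty(B(0,\rho))}$ with the integrand of \eqref{contflow}, invoke \eqref{lipt} for the Lipschitz constant of $g_1$ alone, and conclude by \eqref{gronvalla}. Your closing remark about needing an a priori bound on the leader trajectories so that the constants truly depend only on $r$, $C$, $R$, $T$ is a fair observation of a point the paper's proof also passes over silently.
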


\begin{proof}
Let $g_1$ and $g_2\colon [0,T]\times \R^{2d}\to \R^{2d}$  be the right-hand sides of \eqref{system}, and \eqref{system+}, respectively. As in  \eqref{trzu} we can find a constant $C'$ which depends only on $C$ and $R$ such that
\begin{equation}\label{trzu+}
 |H\star \mu^i(t) (P)| + |H\star \mu^i_m(t) (P)|\}\le C'(1+|P|) \quad i=1,2,
\end{equation}
for every $t \in [0, T]$ and every $P \in \R^{2d}$. Setting now $\hat C=1+C'$, it follows that $g_1$ and $g_2$ both satisfy \eqref{ttz} with $C$ replaced by $\hat C$. Therefore, for every $P_1$ and $P_2 \in \R^{2d}$ such that $|P_i|\le r$, $i=1,2$ and every $t\in [0,T]$, \eqref{gron} gives
$$
|{\mathcal T}^{\mu^i,\mu^i_m}_t(P_1)|\le \Big(r+ \hat CT \Big) \,e^{\hat C T}, \quad i=1,2.
$$
Set $\rho:=\Big(r+ \hat C T \Big) \,e^{\hat CT}$. Now, obviously
$$
\|g_1(t,\cdot)-g_2(t,\cdot)\|_{L^\infty(B(0,\rho))}=\|(H\star \mu^1(s)-H\star \mu^2(s)) + (H\star \mu^1_m(s)-H\star \mu^2_m(s))\|_{L^\infty(B(0,\rho))}
$$
for every $t \in [0, T]$.

Furthermore, by  \eqref{lipt} and the definition of $\rho$, the Lipschitz constant of $g_1(t, \cdot)$  on $B(0,\rho)$ can be estimated by a constant $L>0$ only depending on $R$, $C$, $r$ and $T$. With this, the conclusion follows at once from \eqref{gronvalla}.
\end{proof}
We additionally recall the following Lemmata (see, e.g., \cite[Lemma 3.11, Lemma 3,13, Lemma 3.15, and Lemma 4.7]{CanCarRos10} for their proofs).
\begin{lemma}\label{primstim}
Let $E_1$ and $E_2 \colon \R^n \to \R^n$ be two bounded Borel measurable functions. Then, for every $\mu \in \PP(\R^n)$ one has
\begin{equation*}
\WW((E_1)_\sharp \mu, (E_2)_\sharp \mu) \le \|E_1-E_2\|_{L^\infty({\rm supp}\,\mu)}\,.
\end{equation*}
If in addition $E_1$ is locally Lipschitz continuous, and $\mu$, $\nu \in \PP(\R^n)$ are both compactly supported on a ball $B_r$ of $\R^n$, then
\begin{equation}\label{transport}
\WW((E_1)_\sharp \mu, (E_1)_\sharp \nu) \le L_r \WW(\mu, \nu)\,,
\end{equation}
where $L_r$ is the Lipschitz constant of $E_1$ on $B_r$.
\end{lemma}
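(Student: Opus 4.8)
The plan is to establish both inequalities in the elementary way, by exhibiting in each case an explicit admissible transference plan obtained as a pushforward and then estimating the Kantorovich cost \eqref{wasserstein} directly; since $\WW$ is an infimum over $\Pi(\cdot,\cdot)$, any single admissible plan already yields an upper bound.

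First I would prove the inequality $\WW((E_1)_\sharp\mu,(E_2)_\sharp\mu)\le\|E_1-E_2\|_{L^\infty(\supp\mu)}$. Consider the Borel map $\Phi\colon\R^n\to\R^n\times\R^n$ defined by $\Phi(x):=(E_1(x),E_2(x))$ and set $\pi:=\Phi_\sharp\mu$. Composing the two coordinate projections with $\Phi$ recovers $E_1$ and $E_2$ respectively, so the first and second marginals of $\pi$ are $(E_1)_\sharp\mu$ and $(E_2)_\sharp\mu$, i.e.\ $\pi\in\Pi((E_1)_\sharp\mu,(E_2)_\sharp\mu)$. Inserting this plan into \eqref{wasserstein} and applying the change-of-variables formula for pushforwards gives
\[
\WW((E_1)_\sharp\mu,(E_2)_\sharp\mu)\le\int_{\R^n\times\R^n}|z-w|\,d\pi(z,w)=\int_{\R^n}|E_1(x)-E_2(x)|\,d\mu(x),
\]
and since $\mu$ is a probability measure concentrated on $\supp\mu$, the last integral is bounded by $\|E_1-E_2\|_{L^\infty(\supp\mu)}$, which is the claim.

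For the second inequality I would instead transport an arbitrary coupling of $\mu$ and $\nu$ through $E_1$. Given any $\pi\in\Pi(\mu,\nu)$, define $\Psi(x,y):=(E_1(x),E_1(y))$ and note that $\gamma:=\Psi_\sharp\pi\in\Pi((E_1)_\sharp\mu,(E_1)_\sharp\nu)$ by the same marginal computation as above. Hence
\[
\WW((E_1)_\sharp\mu,(E_1)_\sharp\nu)\le\int|z-w|\,d\gamma(z,w)=\int|E_1(x)-E_1(y)|\,d\pi(x,y).
\]
Because $\mu$ and $\nu$ are supported in $B_r$, every $\pi\in\Pi(\mu,\nu)$ is concentrated on $B_r\times B_r$, so on the domain of integration the local Lipschitz estimate $|E_1(x)-E_1(y)|\le L_r|x-y|$ holds; pulling $L_r$ out yields $\WW((E_1)_\sharp\mu,(E_1)_\sharp\nu)\le L_r\int|x-y|\,d\pi(x,y)$ for every $\pi$. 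Taking the infimum over $\pi\in\Pi(\mu,\nu)$ then gives $\WW((E_1)_\sharp\mu,(E_1)_\sharp\nu)\le L_r\WW(\mu,\nu)$, and in particular no appeal to the existence of an optimal plan is needed.

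The argument is essentially routine, the proof being almost entirely bookkeeping. The only points that deserve care are the verification that the constructed pushforwards carry the correct marginals --- a direct check on the coordinate projections --- and, for the second bound, the observation that the compact support confines the relevant couplings to $B_r\times B_r$, so that the \emph{local} Lipschitz constant $L_r$ (rather than a global one, which $E_1$ need not possess) is precisely what controls the estimate.
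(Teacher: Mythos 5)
Your proof is correct. Note, however, that the paper itself contains no proof of this lemma: it is recalled in the Appendix with a pointer to \cite[Lemmas 3.11 and 3.13]{CanCarRos10}, so there is no internal argument to compare against. Your coupling construction --- pushing $\mu$ forward through $x\mapsto(E_1(x),E_2(x))$ for the first bound, and pushing an arbitrary plan $\pi\in\Pi(\mu,\nu)$ through $(x,y)\mapsto(E_1(x),E_1(y))$ for the second --- is the standard self-contained route, and it exploits exactly the Kantorovich formulation \eqref{wasserstein} that the paper records; your remark that the second bound holds for \emph{every} plan, so that no optimal plan is needed, is exactly right. An equally short alternative works from the dual formulation \eqref{mkrdist}, which the paper also recalls: for any $\varphi$ with $\operatorname{Lip}(\varphi)\le 1$ one has $\int\varphi\,d\bigl((E_1)_\sharp\mu-(E_2)_\sharp\mu\bigr)=\int(\varphi\circ E_1-\varphi\circ E_2)\,d\mu\le\|E_1-E_2\|_{L^\infty(\supp\mu)}$, and $\int\varphi\,d\bigl((E_1)_\sharp\mu-(E_1)_\sharp\nu\bigr)=\int\varphi\circ E_1\,d(\mu-\nu)$, where $\varphi\circ E_1$ is Lipschitz with constant $L_r$ on $B_r$ and can be extended off $B_r$ with the same constant (McShane), giving \eqref{transport}; the coupling route has the mild advantage of avoiding this extension step. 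Two small points you may wish to make explicit: the pushforward measures lie in $\PP(\R^n)$ (so that $\WW$ between them is defined) because $E_1$ and $E_2$ are bounded, and the passage from $\int|E_1-E_2|\,d\mu$ to the $L^\infty(\supp\mu)$ bound uses that $\mu(\supp\mu)=1$.
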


\begin{lemma}\label{secstim}
Let $H\colon \R^{2d} \to \R^{d}$ be a locally Lipschitz function satisfying \eqref{lingrowth}, let $\mu\colon[0,T]\to \PP(\R^{2d})$ and $\nu\colon[0,T]\to \PP(\R^{2d})$ be continuous maps with respect to $\WW$ both satisfying 
\begin{equation*}
{\rm supp }\,\mu(t)\subset B(0,R)\quad\hbox{and}\quad {\rm supp }\,\nu(t)\subset B(0,R)
\end{equation*}
for every $t \in [0, T]$. Then for every $\rho >0$ there exists a constant $L_{\varrho, R}$ such that
$$
\|H\star \mu(t)-H\star \nu(t)\|_{L^\infty(B(0,\rho)}\le L_{\varrho, R}\WW(\mu(t), \nu(t))
$$
for every $t \in [0, T]$.
\end{lemma}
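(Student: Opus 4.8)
The plan is to reduce the $L^\infty$ bound to a pointwise estimate and then exploit the optimal transport plan representation \eqref{wasserstein} of $\WW$. Fix $t \in [0,T]$ and an arbitrary evaluation point $y \in B(0,\rho)$. Recalling that $H \star \mu(t)(y) = \int_{\R^{2d}} H(y-\eta)\, d\mu(t)(\eta)$, and analogously for $\nu(t)$, the quantity to be controlled is
\begin{equation*}
H\star \mu(t)(y) - H \star \nu(t)(y) = \int_{\R^{2d}} H(y-\eta)\, d\mu(t)(\eta) - \int_{\R^{2d}} H(y-\xi)\, d\nu(t)(\xi).
\end{equation*}

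Next I would pick an optimal transference plan $\pi \in \Pi(\mu(t),\nu(t))$ realizing the infimum in \eqref{wasserstein}, so that $\int |\eta-\xi|\, d\pi(\eta,\xi) = \WW(\mu(t),\nu(t))$. Since the two marginals of $\pi$ are $\mu(t)$ and $\nu(t)$, both supported in $B(0,R)$, the plan $\pi$ is supported in $B(0,R)\times B(0,R)$. Using the marginal property to express each of the two integrals above as an integral against $\pi$, the difference becomes
\begin{equation*}
H\star \mu(t)(y) - H \star \nu(t)(y) = \int_{\R^{2d}\times\R^{2d}} \left[ H(y-\eta) - H(y-\xi) \right] d\pi(\eta,\xi),
\end{equation*}
where for $\pi$-almost every $(\eta,\xi)$ we have $\eta,\xi \in B(0,R)$.

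The key observation is that, because $y \in B(0,\rho)$ and $\eta,\xi \in B(0,R)$, both arguments $y-\eta$ and $y-\xi$ lie in the compact ball $\overline{B(0,\rho+R)}$. Since $H$ is locally Lipschitz, it admits a finite Lipschitz constant $L_{\rho,R}$ on $\overline{B(0,\rho+R)}$ depending only on $\rho$ and $R$, and crucially independent of both $t$ and $y$. Hence the integrand is pointwise bounded by $L_{\rho,R}|\eta-\xi|$, and integrating against $\pi$ gives
\begin{equation*}
\left| H\star \mu(t)(y) - H \star \nu(t)(y) \right| \le L_{\rho,R}\int_{\R^{2d}\times\R^{2d}} |\eta-\xi|\, d\pi(\eta,\xi) = L_{\rho,R}\,\WW(\mu(t),\nu(t)).
\end{equation*}
As this bound is uniform in $y \in B(0,\rho)$, taking the supremum over all such $y$ yields the claimed estimate.

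The only subtle point, and the reason the support hypothesis cannot be dropped, is precisely that $H$ is merely \emph{locally} Lipschitz: the naive route through the Kantorovich--Rubinstein duality \eqref{mkrdist} would demand a \emph{global} Lipschitz constant of $\eta \mapsto H(y-\eta)$, which need not exist. Confining the relevant arguments to the compact ball $\overline{B(0,\rho+R)}$ via the support constraints on $\mu(t)$ and $\nu(t)$ is exactly what makes the Lipschitz constant finite and, equally importantly, uniform in $t$, so that a single $L_{\rho,R}$ serves all times simultaneously. No other difficulty arises; the growth bound \eqref{lingrowth} plays no role here beyond guaranteeing that $H\star\mu(t)$ and $H\star\nu(t)$ are well defined.
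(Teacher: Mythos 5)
Your proof is correct. One thing to be aware of: the paper does not actually prove Lemma \ref{secstim} --- it is recalled without proof from the literature (see the sentence preceding Lemma \ref{primstim}, citing \cite[Lemma 4.7]{CanCarRos10}) --- so there is no in-paper argument to compare against; your write-up supplies a self-contained proof. Every step checks out: an optimal transference plan $\pi$ for the cost $|x-y|$ exists, it is concentrated on $B(0,R)\times B(0,R)$ because its marginals are, and the local Lipschitz property of $H$ on the compact ball $\overline{B(0,\rho+R)}$ yields a constant depending only on $\rho$, $R$, and $H$, hence uniform in $t$ and in the evaluation point $y$. It is worth contrasting this with the route one would expect from the dual formulation \eqref{mkrdist}: there, as you correctly point out, $\eta \mapsto H(y-\eta)$ is only locally Lipschitz, so one must first restrict it to $\overline{B(0,R)}$ and extend that restriction (e.g., by the McShane extension, applied componentwise) to a globally Lipschitz function with the same constant; the extension leaves the integrals unchanged because $\mu(t)$ and $\nu(t)$ charge only $B(0,R)$, and duality then gives the bound. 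Your plan-based argument buys two things over that route: it avoids the extension step entirely, and it handles the vector-valued $H$ directly, without componentwise bookkeeping of constants. Your closing remark is also accurate --- the growth condition \eqref{lingrowth} is not needed here, since the compact supports already make the convolutions finite.
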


\section*{Acknowledgement}

Massimo Fornasier acknowledges the support of the ERC-Starting Grant HDSPCONTR ``High-Dimensional Sparse Optimal Control''. Massimo Fornasier and Francesco Rossi  acknowledge the support of the DAAD-PHC (PROCOPE) Project 57049753 ``Sparse Control of Multiscale Models of Collective Motion''. Benedetto Piccoli and Francesco Rossi acknowledge for the support the NSFgrant \#1107444 (KI-Net).
\\
Massimo Fornasier and Benedetto Piccoli thank Peter A. Markowich for the stimulating discussions at the International Congress on Industrial and Applied Mathematics (ICIAM) 2011 in Vancouver, which led to the starting of this joint collaboration.

\bibliographystyle{abbrv}
\bibliography{biblioflock}

\end{document}